\newtheorem{theorem}{Theorem}[section]
\newtheorem{lemma}[theorem]{Lemma}
\newtheorem{proposition}[theorem]{Proposition}
\newtheorem{corollary}[theorem]{Corollary}
\theoremstyle{definition}
\newtheorem*{theorem*}{Theorem}
\newtheorem*{T1}{\textbf{Theorem~\ref{main theorem euler}}}
\newtheorem*{T2}{\textbf{Theorem~\ref{main theorem axisymmetric}}}
\theoremstyle{remark}
\newtheorem{remark}[theorem]{Remark}
\numberwithin{equation}{section}
\newcommand{\abs}[1]{\lvert#1\rvert}
\newcommand{\norm}[1]{\left\lVert#1\right\rVert}
\newcommand{\ip}[1]{\langle #1 \rangle}
\newcommand{\R}{\mathbb{R}}
\newcommand{\T}{\mathbb{T}}
\newcommand{\Z}{\mathbb{Z}}
\newcommand{\D}{\mathscr{D}}
\newcommand{\G}{\mathscr{G}}
\newcommand{\n}{\noindent}
\newcommand{\g}{\mathfrak{g}}
\newcommand{\rot}{\text{rot}}
\newcommand{\symp}{\nabla^\perp}
\newcommand{\A}{\mathscr{A}}
\DeclareMathOperator{\curl}{curl}
\DeclareMathOperator{\Ad}{Ad}
\DeclareMathOperator{\ad}{ad}
\begin{document}

\title[Two-point boundary value problems]{Two-point boundary value problems on diffeomorphism groups}

\author{Patrick Heslin}
\curraddr{Dept. of Mathematics, Florida State University, Tallahassee, FL 32304}
\email{pheslin@fsu.edu}

\subjclass[2000]{58D05}

\date{}

\keywords{Diffeomorphism groups, exponential maps, coadjoint orbits, Euler equations}

\begin{abstract}
We consider a variety of geodesic equations on Sobolev diffeomorphism groups, including the equations of ideal hydrodynamics. We prove that solutions of the corresponding two-point boundary value problems are precisely as smooth as their boundary conditions. We further utilise this regularity property to construct continuously differentiable exponential maps in the Frech{\'e}t setting.
\end{abstract}

\maketitle

\section{Introduction}
The modern formulation of hydrodynamics goes all the way back to Euler in 1757 from whom we acquire the following equations describing the motion of an ideal (incompressible \& inviscid) fluid in a closed $n$-dimensional Riemannian manifold $M$:
\begin{equation} \label{Euler}
\begin{cases}
	\partial_t u + \nabla_uu = -\nabla p \\
	\text{div}(u) = 0\\
\end{cases}
\end{equation}
If we further impose the condition that $u(0)=u_0$ we obtain the Cauchy problem for \eqref{Euler} whose rigorous studies date back to the 1920s with G{\"u}nther and Lichtenstein, the 1930s with Wolibner and the 1960s with Yudovich and Kato. The list goes on. Many of these references can be found in the monographs of Majda and Bertozzi \cite{majda}, Arnold and Khesin \cite{ak} and Bahouri, Chemin and Danchin \cite{bcd}.

\par In Arnold's seminal paper \cite{a} he observed that solutions of \eqref{Euler} can be viewed as geodesics of a right-invariant $L^2$ metric on the group $\D_\mu(M)$ of volume-preserving diffeomorphisms (sometimes referred to as \textit{volumorphisms}). In essence, this approach showcases the natural framework in which to tackle \eqref{Euler} from the Lagrangian viewpoint. In their celebrated paper Ebin and Marsden \cite{em} provided the formulation of the above in the $H^s$ Sobolev setting. Among other things they proved that for $s > \frac{n}{2}+1$ the space of volumorphisms can be given the structure of a smooth, infinite dimensional Hilbert manifold. They showed that when equipped with Arnold's $L^2$ metric the geodesic equation on this manifold is a smooth ordinary differential equation. They then applied the classic iteration method of Picard to obtain existence, uniqueness and smooth dependence on initial conditions. In particular, the last property allows one to define a smooth exponential map on $\D^s_\mu(M)$ in analogy with the classical construction in finite dimensional geometry. Hence, the work of Arnold, Ebin and Marsden enables us to explore the Riemannian geometry of fluid motion.
\par One of the central results in \cite{em} says: Let $\gamma(t) \in \D^{s}_\mu(M)$ be an $L^2$ geodesic. If $\gamma(0) \in \D^{s+1}_\mu(M)$ and $\dot{\gamma}(0) \in T_e\D^{s+1}_\mu(M)$, then $\gamma(t) \in \D^{s+1}_\mu(M)$ for all $t$ for which it was defined in $\D^s_\mu(M)$ cf. \cite[Theorem~12.1]{em} i.e., roughly speaking, if one considers \eqref{Euler} in Lagrangian coordinates as a second order ODE  in $\D^s_\mu(M)$, any solution of the corresponding Cauchy problem (an $L^2$ geodesic) is as smooth as its initial conditions. A natural question is if a similar regularity result holds when the geodesic equation is framed as a two-point boundary value problem. More precisely: \textit{Given an $L^2$ geodesic $\gamma(t)$ emanating from the identity in $\D^{s}_\mu(M)$ and, at some later time $t_0>0$, passing through $\eta \in \D^{s+k}_\mu(M)$, can it be shown that $\gamma(t)$ evolves entirely in $\D^{s+k}_\mu(M)$, i.e., is an $L^2$ geodesic as smooth as its boundary conditions?} We provide an affirmative answer to this question in the setting of 2D ideal fluids, 3D axisymmetric fluids with zero swirl, 1D equations such as the $\mu$CH and Hunter-Saxton equations, as well as the Euler-$\alpha$ equations, the symplectic Euler equations and other higher order Euler-Arnold equations arising from $H^r$ metrics with $r>0$.

\par The earliest results of this nature, to the best of the author's knowledge, are due to Constantin and Kolev in \cite{ck2002} and \cite{ck2003}. Here they used the above property to show the existence of a continuously Frech{\'e}t differentiable exponential map for right-invariant $H^{r}$ metrics, $r \in \Z_{\geq 1}$, on the group of $C^\infty$ diffeomorphisms of the circle. This result was improved upon by Kappeler, Loubet and Topalov \cite{klt1d} who showed that these exponential maps are in fact Frech{\'e}t bianalytic near zero.

\par This regularity property was further explored and used as a key ingredient by the latter authors in their study \cite{klt} of the group of orientation-preserving diffeomorphisms of the 2-torus, equipped with various right-invariant $H^r$ metrics, again for $r \in \Z_{\geq 1}$. As in \cite{ck2002} and \cite{ck2003}, their goal was to show the existence of a continuously Frech{\'e}t differentiable exponential map on the group of $C^{\infty}$ diffeomorphisms of the 2-torus.
\par {More recent work includes that of Bruveris \cite{bruveris}, who obtained a similar result to the above on the full diffeomorphism group of an arbitrary closed manifold. He shows, for a smooth right-invariant metric on $\D^s(M)$ with smooth exponential map, a $H^r$ geodesic is as smooth as its boundary points, provided that they are not conjugate.}

\par {As far as the author is aware, our results are the first of their nature pertaining to the Euler equations of ideal hydrodynamics. We record some examples of these here:}
\begin{theorem}\label{main theorem euler}
Let $s > 6$ and consider the group of volume-preserving diffeomorphisms of $\T^2$ equipped with a right-invariant $L^2$ metric. Given $u_0 \in T_e\D^s_\mu$ let $\gamma(t)$ denote the corresponding $L^2$ geodesic. If at time $t=1$, $\gamma$ passes through a point $\eta \in \D^{s+1}_\mu$, then we have $u_0 \in T_e\D^{s+1}_\mu$ and consequently $\gamma(t)$ evolves entirely in $\D^{s+1}_\mu$.
\end{theorem}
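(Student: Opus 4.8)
The plan is to follow the architecture of the proof of Theorem~\ref{main theorem compressible}: starting from \eqref{conjugation relationship}, I would reduce the claim to showing that the integral operator $M$ of \eqref{M}, now specialised to the volumorphism group with $r=0$, is a linear isomorphism $H^{s+1}\to H^{s-1}$ on divergence-free fields. Once this is established, the identity $\Delta\eta = D\eta\, M(u_0) + \widetilde G$ with $\widetilde G\in H^{s-1}$ and $\Delta\eta\in H^{s-1}$ (as $\eta\in\D^{s+1}_\mu$) forces $M(u_0) = D\eta^{-1}(\Delta\eta-\widetilde G)\in H^{s-1}$ via the product estimates, whence $u_0 = M^{-1}\big(D\eta^{-1}(\Delta\eta-\widetilde G)\big)\in H^{s+1}$; together with $\operatorname{div} u_0 = 0$ this gives $u_0\in T_e\D^{s+1}_\mu$, and the conservation law \eqref{Lie group conservation law equation} propagates this regularity along the whole geodesic.

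The first step is to identify the conservation law in this setting. For the $L^2$ metric on $\D^s_\mu(\T^2)$ the group coadjoint is $\Ad^{*_0}_\eta v = P_e\big(D\eta^{\top}(v\circ\eta)\big)$, with $P_e$ the Leray projection; at the level of the scalar vorticity $\omega=\curl v$ this is simply transport, $\omega(t)\circ\gamma(t)=\omega_0$. This is the decisive structural feature of the two-dimensional incompressible problem. In the compressible case the coadjoint acts on the velocity by multiplication by the $H^{s-1}$ coefficients of $D\eta^{\top}$, the operation responsible for the derivative loss of Remark~\ref{main obstacle} that forced $r\geq 1$; here, instead, the conserved object is the scalar vorticity, merely composed with $\gamma^{-1}$. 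Concretely, $\big(\Ad^{-1}_{\gamma(t)}\big)^{*_0}u_0 = \symp\Delta^{-1}\big((\curl u_0)\circ\gamma(t)^{-1}\big)$ (modulo the finite-dimensional harmonic part, which is separately conserved and regularity-irrelevant), and since composition of an $H^k$ function with the $H^s$ diffeomorphism $\gamma(t)^{-1}$ loses no derivatives while the Biot--Savart map $\symp\Delta^{-1}$ gains one, the operator $\big(\Ad^{-1}_{\gamma(t)}\big)^{*_0}$ is bounded on $H^k$ for every $k\leq s+1$. Tracing these mapping properties through \eqref{M} shows $M:H^{s+1}\to H^{s-1}$ is bounded without the $A^{r/2}$ buffer.

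For the base of the induction I would establish coercivity and invoke Lax--Milgram exactly as before. Writing $w=\big(\Ad^{-1}_{\gamma(t)}\big)^{*_0}v$ and using that $\Ad^{-1}_{\gamma(t)}$ and $\big(\Ad^{-1}_{\gamma(t)}\big)^{*_0}$ are $L^2$-adjoints, the bilinear form $\Lambda(v,v)=\langle Mv,v\rangle_{L^2}$ equals $\int_0^1\langle P_\lambda(t)w,w\rangle_{L^2}\,dt$, which by Lemma~\ref{elliptic estimate} is $\gtrsim\int_0^1\norm{w}_{H^1}^2\,dt$. Since $\norm{w}_{H^1}\simeq\norm{\curl w}_{L^2}=\norm{(\curl v)\circ\gamma(t)^{-1}}_{L^2}=\norm{\curl v}_{L^2}\simeq\norm{v}_{H^1}$, uniformly in $t\in[0,1]$ because $\gamma(t)$ is volume-preserving, this dominates $\norm{v}_{H^1}^2$ and yields that $M:H^1\to H^{-1}$ is an isomorphism on divergence-free fields.

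The main obstacle is the bootstrapping step, i.e.\ the analogue of the commutator lemma asserting $[M,\partial_j]:H^k\to H^{k-2}$ for $1\leq k\leq s$, feeding into $\partial_j f = M^{-1}\partial_j g + M^{-1}[M,\partial_j]f$ to upgrade $M^{-1}$ one order at a time; for $r=0$ the $A^{r/2}$-splitting of the compressible proof is unavailable. Here I would exploit the flat geometry of $\T^2$: the operators $\curl$, the Biot--Savart operator $\symp\Delta^{-1}$, the Leray projection $P_e$ and $\Delta$ are all Fourier multipliers and hence commute with $\partial_j$, so $[M,\partial_j]$ only sees the $\gamma$-dependent pieces, namely composition with $\gamma^{\pm1}$ and multiplication by the $H^{s-1}$ coefficients of $D\gamma^{-1}$ and of $p_{ij}$. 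Their commutators with $\partial_j$ produce coefficients one order less regular, but after passing through the order-two drop of $P_\lambda$ these land two orders below the input, as required; the Kato--Ponce-type estimates underlying \eqref{taylor} and the composition estimates control each term. The technical heart is verifying that in the worst term, where $\partial_j$ differentiates the outermost multiplication by $D\gamma^{-1}\in H^{s-1}$, the product $H^{s-2}\cdot H^{s-2}\to H^{s-2}$ closes, which is exactly where the margin $s>6$ is spent, guaranteeing all intervening products, compositions, and the single Biot--Savart smoothing suffice.
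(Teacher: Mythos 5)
Your overall architecture is the same as the paper's: vorticity transport \eqref{conservation of vorticity} replaces the derivative-losing coadjoint of the compressible case (you correctly identify this as the reason $r=0$ is admissible here, cf.\ Remark \ref{main obstacle}), leading to the conservation law \eqref{euler con exact}, an operator equation via Lemma \ref{workhorse}, a Lax--Milgram base case, and a commutator bootstrap. However, there is a genuine gap in your coercivity step. You write $\Lambda(v,v)=\langle Mv,v\rangle_{L^2}=\int_0^1\langle P_\lambda(t)w,w\rangle_{L^2}\,dt$ with $w=\big(\Ad^{-1}_{\gamma(t)}\big)^{*_0}v$, ``using that $\Ad^{-1}_{\gamma(t)}$ and $\big(\Ad^{-1}_{\gamma(t)}\big)^{*_0}$ are $L^2$-adjoints.'' But the operator $\big(\Ad^{-1}_{\gamma(t)}\big)^{*_0}$ supplied by the conservation law, namely $v\mapsto \symp\Delta^{-1}\big((\curl v)\circ\gamma(t)^{-1}\big)$, is the adjoint of the \emph{Leray-projected} map $P_e\circ\Ad^{-1}_{\gamma(t)}$ restricted to divergence-free fields. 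Since $P_\lambda w$ is not divergence-free, the pairing $\langle \Ad^{-1}_{\gamma}P_\lambda w, v\rangle_{L^2}$ equals $\langle P_\lambda w,\ w+\nabla q\rangle_{L^2}$ for a gradient term $\nabla q$ of the same order as $w$, and after the integration by parts the cross term is $O(\norm{w}_{H^1}\norm{\nabla q}_{H^1})$ --- comparable to the main term, with no sign and no smallness. So coercivity does not follow from the computation as you have set it up.

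The paper's proof is organised precisely to avoid this. Instead of conjugating by $\Ad_{\gamma}=D\gamma\cdot R_{\gamma}$, it conjugates only by $R_{\gamma}$ (an $L^2$-isometry since $\gamma$ is volume-preserving), symmetrises the integrand to $\Delta^{-1/2}\pi_0 P_\lambda\Delta^{-1/2}$ as in \eqref{M Euler}, and collects the entire discrepancy into the operator $\Omega(t)=\Gamma^{-1}P_\lambda\Delta^{-1}\Gamma-\Delta^{-1/2}\pi_0P_\lambda\Delta^{-1/2}$, which is then shown (via the Kato--Ponce estimate \eqref{kp laplacian estimate} and explicit commutators with $\Gamma=D\gamma\circ\gamma^{-1}$) to be \emph{smoothing}, i.e.\ to map $H^{s-2}_0\to H^{s-1}$, so that it can be absorbed into the error term $\widetilde G$. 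A related consequence is that the paper's $M$ is an order-zero operator acting on the unknown $\symp\Delta f_0\in H^{s-2}$, for which one proves the regularity-propagation statement $Mv\in H^k\Rightarrow v\in H^k_0$, rather than your order-two isomorphism $H^{s+1}\to H^{s-1}$ acting on $u_0$; your version is not unreasonable, but the symmetrisation and the lemma on $\Omega$ are the technical heart of the argument and are missing from your proposal. Your bootstrap outline (Fourier multipliers commute with $\partial_j$, so $[M,\partial_j]$ only sees the $\gamma$-dependent coefficients) is on the right track and matches the paper's subsequent commutator lemma.
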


\begin{theorem}\label{main theorem axisymmetric}
Let $s>\frac{13}{2}$ and consider the group of axisymmetric diffeomorphisms of $\T^3$ with respect to any of the Killing fields $K=\partial_i$, equipped with a right invariant $L^2$ metric. Given $u_0 \in T_e\A^s_{\mu,0}$ let $\gamma(t)$ denote the corresponding $L^2$ geodesic. If at time $t=1$, $\gamma$ passes through a point $\eta \in \A^{s+1}_{\mu,0}$, then we have $u_0 \in T_e\A^{s+1}_{\mu, 0}$ and consequently $\gamma(t)$ evolves entirely in $\A^{s+1}_{\mu,0}$.
\end{theorem}
\par An immediate consequence of these theorems is that isochronal flows in these settings must either be smooth or of `low-regularity' in the Sobolev sense. We further use these regularity properties to construct continuously Frech{\'e}t differentiable exponential maps in the smooth setting. 
\begin{theorem}\label{frechet T2}
The Frech{\'e}t manifold $\D_\mu(\T^2)$ of smooth diffeomorphisms of $\T^2$ equipped with the $L^2$ metric admits a well-defined exponential map which is a local $C^1_F$-diffeomorphism at the identity.
\end{theorem}
\begin{theorem}\label{frechet T3}
The Frech{\'e}t manifold $\A_{\mu,0}(\T^3)$ of smooth axisymmetric swirl-free diffeomorphisms of $\T^3$ with respect to any of the Killing Fields $K=\partial_i$ equipped with the $L^2$ metric admits a well-defined exponential map which is a local $C^1_F$-diffeomorphism at the identity.
\end{theorem}
Later sections will provide more precise statements of these results and notations.
\par Theorem \ref{main theorem euler} improves upon a similar result of Omori \cite{omori1973} which is infinitesimal in character. His method involves constructing a connection on the full diffeomorphism group which turns the volume-preserving diffeomrphisms into a totally geodesic submanifold. In the proof of Theorem \ref{main theorem axisymmetric} we make use of the recent work of Lichtenfelz, Misio{\l}ek and Preston \cite{lmp} on the Euler equations in 3 dimensions with axisymmetric swirl-free initial data.

\par Throughout the arguments we will consider multiple configuration spaces: the full diffeomorphism group, the space of volumorphisms and its subspace of axisymmetric diffeomorphisms and the space of symplectomorphisms, all equipped with various right-invariant metrics. In each setting we will consider Sobolev diffeomorphisms of class $H^s$ with varying requirements on the value of $s$ and we may write $\G^s(M)$ to refer to any of the aforementioned spaces. Similarly, we use $\G(M)$ to refer to the corresponding smooth settings.

\par The paper is organised as follows: Sections \ref{Background1} \& \ref{Background2} contain the necessary background information for the spaces we will be considering. Section \ref{main} contains the main results. {We begin in Section \ref{diffeomorphisms} with the setting of compressible fluids on $\T^n$. The theorems presented here generalise to arbitrary dimensions the results from one and two dimensions contained in \cite{ck2002}, \cite{ck2003} and \cite{klt} without any restrictions on whether or not the endpoints are conjugate, unlike the result contained in \cite{bruveris}.} We then proceed to the Euler equations on $\T^2$ in Section \ref{volumorphisms}, where we also cover the cases of the Euler-$\alpha$ and higher order Euler-Arnold equations mentioned above. This is followed by the axisymmetric results for 3D fluids in Section \ref{axisymmetric}. The final case of the Symplectic Euler equations on $\T^{2k}$ is covered in Section \ref{symplectomorphisms}. In Section \ref{smooth} we use our results from the Section \ref{main} to construct continuously differentiable exponential maps in the Frech{\'e}t settings.

\section{Manifold Structure of Sobolev Diffeomorphisms}\label{Background1}
\par Here we gather some basic facts about diffeomorphism groups. Further details concerning these spaces can be found in Ebin and Marsden \cite{em}, Arnold and Khesin \cite{ak}, Ebin \cite{ebin2012geodesics}, Omori \cite{omori}, Misio{\l}ek and Preston \cite{mp} and Inci, Kappeler and Topalov \cite{kit}.
\subsection{Sobolev Diffeomorphisms.} Let $M$ be a compact, $n$-dimensional Riemannian manifold without boundary, with metric $g$ and volume form $\mu$. Let $g^\flat$ and $g^\sharp$ denote the usual ``musical isomorphisms".
We define the Hodge Laplacian on $k$-forms by $\Delta = d\delta + \delta d$, where $d$ is the exterior derivative, $\delta=(-1)^{n(k+1)+1}\star d \star $ is its $L^2$ dual, and $\star$ is the Hodge Star operator.
\par We define $H^s(TM)$ to be the space of vector fields on $M$ with $L^2$ derivatives up to order $s$. We equip $H^s(TM)$ with a $H^s$ inner product via:
$$\ip{u,v}_s = \sum_{k \leq s} \ip{u,(1 + \Delta)^s v}_{L^2}$$
\par In this paper we will concern ourselves with the case $s \in \Z_{\geq 0}$, but many of the constructions can be extended to the fractional case.
\par Recall that if $s > \frac{n}{2}$, then $H^s(M,M)$ is a Hilbert manifold modeled on $H^s(TM)$ equipped with $\ip{ \ , \ }_s$. If we further require $s > \frac{n}{2} + 1$, then $\D^s(M)$ inherits a smooth submanifold structure as an open subset of $H^s(M,M)$. Its tangent space at the identity is $T_e\D^s(M) = H^s(TM)$ whose dual enjoys an $L^2$-orthogonal decomposition by the Hodge theorem:
\begin{equation}\label{hodge}
    H^s(T^*M) = \mathcal{H} \oplus d \delta H^{s+2}(T^*M) \oplus \delta d H^{s+2}(T^*M)
\end{equation}
\n where $\mathcal{H}$ denotes the finite dimensional subspace of harmonic one-forms on $M$.
\par In what follows it will be convenient to define the projections:
\begin{equation}\label{mean-zero and harmonic projections}
    \pi_0 : T_e\D^s \rightarrow d \delta H^{s+2} \oplus \delta d H^{s+2} \quad \text{and} \quad \pi_\mathcal{H}: T_e\D^s \rightarrow \mathcal{H}
\end{equation}
\subsection{Volume-Preserving Diffeomorphisms} The volumorphism group is defined in terms of the Riemannian volume form $\D^s_{\mu}(M) = \{ \eta \in \D^s(M) \ \vert \ \eta^*\mu = \mu \}$. It is a smooth Hilbert submanifold of $\D^s(M)$ and the tangent space at the identity consists of all divergence-free $H^s$ vector fields $T_e\D^s_{\mu}(M) = \{ u \in T_e\D^s(M) \ \vert \ \text{div} \ u = 0 \} = g^\sharp\{\mathcal{H} \oplus \delta d H^{s+2}(T^*M)\}$, i.e the first and third summand of the Hodge Decomposition above.
\subsection{Axisymmetric Diffeomorphisms} Let $M$ be a 3-dimensional manifold equipped with a smooth Killing field $K$. Following \cite{lmp} we define a divergence-free vector field $u$ on $M$ to be \textit{axisymmetric} if it commutes with the Killing field: $[K, u]=0$. We denote the set of $H^s$ axisymmetric vector fields by $T_e\mathscr{A}_\mu^{s}(M)$. 
\par A volume-preserving diffeomorphism of $M$ is said to be \textit{axisymmetric} if it commutes with the flow of the Killing field $K$. The set of all such $H^s$ volumorphisms, $\mathscr{A}_\mu^s(M)$ is a topological group as well as a smooth totally geodesic Hilbert submanifold of $\D_\mu^s(M)$, cf. \cite[Section 3]{lmp}.
\par Axisymmetric fluid flows are of great interest and their behaviour might be informally described as $2\frac{1}{2}$-dimensional fluids, cf. \cite{uy}.
\subsection{Symplectomorphisms} Let $M$ be a symplectic manifold of dimension $2k$ and let $\omega^\flat$ and $\omega^\sharp$ denote the standard ``$\omega$-musical isomorphisms". Analogously to volumorphisms, the symplectomorphism group $\D^s_\omega(M)$ is a closed Hilbert submanifold of $\D^s(M)$ consisting of those diffeomorphisms which preserve the symplectic form $\omega$ under pullback. The tangent space at the identity is $T_e\D^s_{\omega}(M) = \{ u \in T_e\D^s(M) \ \vert \  \ d\omega^\flat u = 0 \} = \omega^\sharp \big( \mathcal{H} \oplus d \delta H^{s+2}(T^*M) \big)$, cf. \eqref{hodge}.
\par For our purposes we will further require that $g$ and $\omega$ are compatible, that is, the map $J : = g^\sharp \omega^\flat : TM \rightarrow TM$ satisfies $J^2 = -Id$. In this case $J$ is said to give $M$ an almost complex structure, cf. \cite{em}, \cite{ebin2012geodesics}, \cite{benn}, \cite{polterovich}. 
\section{Lie Group Structure and Geodesics on Diffeomorphism Groups}\label{Background2}
\par {Here we develop an infinite dimensional Lie group framework}\footnote{{It is important to note that, strictly speaking, these Hilbert manifolds of finite regularity mappings are not Lie Groups. However, they are topological groups and possess sufficient geometric structure for our purposes. See Remark \ref{not Lie Groups} for more details.}} for $\D^s(M)$ and its submanifolds of interest. We use this to present geodesic equations on diffeomorphism groups in a general formulation for a variety of metrics. The construction follows closely that of \cite{em} and \cite{mp}.
\par Throughout the various arguments in Section \ref{main} we will deal with products of $H^s$ Sobolev functions and compositions with elements of $\D^s(M)$, so it is crucial that we have some control over the regularity of these objects. {For this reason we recall the following results cf. \cite{adams}, \cite{em}, \cite{kit}.}
\begin{lemma}\label{sobolev}
For any $s > \frac{n}{2} + 1$, $\abs{m} \leq s$ and $k \geq 0$ we have:
\begin{enumerate}
    \item $H^{s}(M,\R) \times H^{m}(M, \R) \rightarrow H^{m}(M, \R) \ ; \  (u,v) \mapsto uv$ is bounded.\\
    \item $H^{s+k}(M,\R^d) \times \D^{s}(M) \rightarrow H^{s}(M, \R^d) \ ; \ (v,\phi) \mapsto v\circ \phi$ is $C^k$-smooth.\\
    \item $\D^{s+k}(M) \rightarrow \D^{s}(M) \ ; \ \phi \mapsto \phi^{-1}$  is $C^k$-smooth.
\end{enumerate}
\end{lemma}
In particular this gives us that $\D^s(M)$ is a topological group (one can readily show that $\D^s_\mu(M)$, $\A^s_\mu(M)$ and $\D^s_\omega(M)$ are all subgroups), where right translation $R_\eta$ is smooth and left translation $L_\eta$ is continuous (although not even Lipschitz continuous) in the $H^s$ topology.
\par Again, using $\G^s(M)$ to refer to any of the aforementioned manifolds, we denote the (almost) Lie algebra by $\g^s = T_e\G^s(M)$, we recall the group adjoint:
\begin{align*}
    \Ad_\eta v &= d_{\eta^{-1}}L_\eta d_eR_{\eta^{-1}} v \\
    &= \big(D\eta \cdot v \big) \circ \eta^{-1}
\end{align*}
and the Lie algebra adjoint:
$$\ad_u v = -[u,v]$$
\n where $\eta(t)$ is any curve in $\G^s(M)$ with $\eta(0) = e$ and $\dot{\eta}(0) = u$. Observe that if $u,v \in T_e\G^{s}$, their commutator is a priori only of Sobolev class $H^{s-1}$.
\begin{remark}\label{not Lie Groups}
{The derivative loss coming from left translation and the Lie bracket are examples of why these diffeomorphism groups are not Lie Groups. However, the group structure they possess is sufficient for our purposes.}
\end{remark}

\par We now equip $\G^s(M)$ with a (weak\footnote{We say a metric on $\G^s$ is \textit{weak} if it induces a weaker topology than the inherent $H^s$ topology.}) right-invariant, Riemannian metric $\ip{\cdot , \cdot}$. Note that the definitions of $\Ad_\eta$ and $\ad_u$ depend only on the group structure, and not on the choice of the metric. Hence, the geometry is, in some sense, encoded in the coadjoint operators defined as follows:
$$\ip{\Ad^*_\eta u , v} = \ip{u, \Ad_\eta v} \quad \text{and} \quad \ip{\ad_u^*v, w} = \ip{v, \ad_u w }$$
for all $u, v, w \in \g^s$.
\n Throughout this paper $*$ will always refer to the adjoint of an operator with respect to the relevant metric and configuration space; which should be clear from the context.
\par We are specifically interested in the case where we equip $\G^s(M)$ with a right-invariant $H^r$ metric with $r\geq 0$ defined at the identity by:
\begin{equation}\label{inertia}
\ip{u, v}_{H^r} := \ip{A^ru,v}_{L^2} \qquad \quad u, v \in T_e\G^s
\end{equation}
\n where $A^r$ denotes an invertible elliptic pseudo-differential operator of order $2r$; e.g. $A^r = (1 + g^\sharp \Delta g^\flat)^r$, cf. Taylor \cite{taylorpseudo}. We refer to such an $A^r$ as an \textit{inertia operator} {and denote its inverse by $A^{-r}$}. We will always assume that $A^r$ commutes with both $d$ and $\delta$ and that we have at least $s > \frac{n}{2} + 2r + 1$ to guarantee a baseline level of control over the regularity of the vector fields involved in the later calculations. We will use $*_r$ to denote the adjoint of an operator with respect to such a $H^r$ metric.
\par We define a geodesic on $\big( \G^s(M), \ip{\cdot, \cdot} \big)$ to be a critical path for the energy functional induced by the metric $\ip{\cdot, \cdot}$ and recall two important lemmas pertaining to geodesics on Lie groups with right-invariant metrics:

\begin{lemma}[cf. {\cite[Theorem~3.2]{mp}}]\label{Lie group geodesic}
If $\G$ is a Lie group equipped with a (possibly weak) right-invariant metric $\ip{\cdot,\cdot}$, then a curve $\sigma(t)$ is a geodesic if and only if the curve $u(t)$ in the Lie algebra given by the flow equation:
\begin{equation*}
    \dot{\sigma}(t) = d_{e}R_{\sigma(t)} u(t)
\end{equation*}
solves the Euler-Arnold equation:
\begin{equation}\label{Lie group geodesic equation}
    \partial_t u (t) = - \ad^*_{u(t)}u(t)
\end{equation}
\end{lemma}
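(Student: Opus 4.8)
The plan is to characterise geodesics variationally: by definition $\sigma$ is a geodesic precisely when it is a critical point of the energy functional
\[
E[\sigma] = \tfrac{1}{2}\int_0^1 \ip{\dot{\sigma}(t), \dot{\sigma}(t)}\, dt
\]
among all curves sharing its endpoints. First I would use right-invariance to trivialise the integrand: writing $u(t) = d_eR_{\sigma(t)^{-1}}\dot{\sigma}(t)$ as in the flow equation, invariance of the metric gives $\ip{\dot{\sigma},\dot{\sigma}} = \ip{u,u}$, so that $E[\sigma] = \tfrac{1}{2}\int_0^1 \ip{u(t),u(t)}\, dt$. The problem thereby reduces to an Euler--Poincar\'e reduction onto the Lie algebra $\g^s$.

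Next I would compute the first variation. Let $\sigma_\varepsilon$ be a one-parameter family of curves with fixed endpoints and $\sigma_0 = \sigma$, let $V = \partial_\varepsilon\vert_{\varepsilon=0}\sigma_\varepsilon$ be the associated variation field (vanishing at $t=0,1$), and set $w(t) = d_eR_{\sigma(t)^{-1}}V(t)$, which again vanishes at the endpoints. The key identity is the right-trivialised variation formula
\[
\delta u := \partial_\varepsilon\big\vert_{\varepsilon=0} u_\varepsilon = \partial_t w - \ad_u w,
\]
obtained by differentiating the defining relations for $u_\varepsilon$ and $w$, invoking equality of the mixed partials $\partial_\varepsilon\partial_t = \partial_t\partial_\varepsilon$, and using the definitions of $\Ad$ and $\ad$.

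With this in hand the conclusion is routine. Differentiating $E$ and substituting gives
\[
\delta E = \int_0^1 \ip{u, \delta u}\, dt = \int_0^1 \ip{u, \partial_t w}\, dt - \int_0^1 \ip{u, \ad_u w}\, dt.
\]
I would integrate by parts in the first integral (the boundary terms vanish since $w(0)=w(1)=0$) and apply the definition of the coadjoint in the second, arriving at
\[
\delta E = -\int_0^1 \ip{\partial_t u + \ad_u^* u,\ w}\, dt.
\]
As $w$ ranges over all Lie-algebra paths vanishing at the endpoints, the fundamental lemma of the calculus of variations forces $\partial_t u = -\ad_u^*u$; every step is an equivalence, which yields the stated ``if and only if''.

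The main obstacle is not this formal computation but its justification in the present infinite-dimensional, weak-metric setting. Because left translation---and hence $\Ad_\eta$---is merely continuous, and the $H^r$ metric induces a strictly weaker topology than the ambient manifold topology, one must verify that $E$ is G\^ateaux differentiable along admissible variations, that the interchange of $\partial_\varepsilon$ and $\partial_t$ is legitimate, and that the variation formula for $\delta u$ holds with the anticipated loss of one derivative from the bracket. Moreover $\ad_u^*u$ is a priori defined only weakly through the pairing; guaranteeing that it determines a genuine element of $\g^s$, so that \eqref{Lie group geodesic equation} is a bona fide evolution equation rather than a mere variational identity, is precisely the regularity issue the coadjoint formalism is designed to manage, and is where the hypotheses on $s$ and on the inertia operator $A^r$ come into play.
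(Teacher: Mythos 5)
Your proposal is correct: the first-variation/Euler--Poincar\'e computation, with the right-trivialised variation formula $\delta u = \partial_t w - \ad_u w$ and the sign conventions matching the paper's definition of $\ad_u^*$, is precisely the standard argument, and it is the one given in the cited source \cite[Theorem~3.2]{mp} (the paper itself states this lemma without proof). Your closing caveats about the weak metric and the a priori regularity of $\ad_u^*u$ are also well placed, since the paper's own subsequent remark concedes that the lemma does not apply verbatim to the $H^s$ diffeomorphism groups and must be verified case by case.
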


\begin{lemma}[cf. {\cite[Corollary~3.3]{mp}}]\label{Lie group conservation law}
If $\sigma(t)$ is a curve in $\G$ with $u(t) = d_{\sigma(t)}R_{\sigma(t)^{-1}}(\dot{\sigma}(t))$ satisfying \eqref{Lie group geodesic equation} with initial conditions $\sigma(0) = e$ and $u(0)=u_0$, then we have the following conservation law:
\begin{equation}\label{Lie group conservation law equation}
    \Ad^{*}_{\sigma(t)}u(t) = u_0
\end{equation}
and hence we can rewrite the flow equation as:
\begin{equation*}
    \dot{\sigma}(t) = d_eR_{\sigma(t)} \Ad^{*}_{\sigma(t)^{-1}}u_0 =d_eL^{*}_{\sigma(t)^{-1}}u_0 
\end{equation*}
\end{lemma}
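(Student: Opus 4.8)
The plan is to establish the conservation law \eqref{Lie group conservation law equation} by showing that the curve $t \mapsto \Ad^*_{\sigma(t)} u(t)$ is constant in $t$, and then to read off the reformulated flow equation as a purely algebraic consequence. First I would compute the $t$-derivative of the \emph{group} adjoint along $\sigma$. Writing $c(\epsilon) = \sigma(t+\epsilon)\sigma(t)^{-1}$ one has $c(0)=e$ and $\dot c(0) = d_{\sigma(t)}R_{\sigma(t)^{-1}}\dot\sigma(t) = u(t)$; using the homomorphism property $\Ad_{gh} = \Ad_g\Ad_h$ together with the definition $\ad_w = \frac{d}{d\epsilon}\big|_{0}\Ad_{c(\epsilon)}$ this yields
\begin{equation*}
\frac{d}{dt}\Ad_{\sigma(t)}v = \ad_{u(t)}\Ad_{\sigma(t)}v, \qquad v \in \g^s.
\end{equation*}
Dualising with respect to the metric via $\ip{\Ad^*_\eta m, v} = \ip{m, \Ad_\eta v}$ and $\ip{\ad^*_w m, v} = \ip{m, \ad_w v}$ then gives the adjoint identity $\frac{d}{dt}\Ad^*_{\sigma(t)} = \Ad^*_{\sigma(t)}\ad^*_{u(t)}$.

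Next I would differentiate the product. By the above and the Leibniz rule,
\begin{equation*}
\frac{d}{dt}\big(\Ad^*_{\sigma(t)}u(t)\big) = \Ad^*_{\sigma(t)}\ad^*_{u(t)}u(t) + \Ad^*_{\sigma(t)}\dot u(t) = \Ad^*_{\sigma(t)}\big(\ad^*_{u(t)}u(t) + \dot u(t)\big).
\end{equation*}
The Euler--Arnold equation \eqref{Lie group geodesic equation} says precisely that the bracketed term vanishes, so $\Ad^*_{\sigma(t)}u(t)$ is constant; evaluating at $t=0$ with $\sigma(0)=e$ and $u(0)=u_0$ gives $\Ad^*_{\sigma(t)}u(t) = u_0$, which is \eqref{Lie group conservation law equation}.

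For the reformulated flow equation I would invert the conservation law. Since $\Ad_\eta\Ad_{\eta^{-1}} = \mathrm{Id}$ implies $\Ad^*_{\eta^{-1}}\Ad^*_\eta = \mathrm{Id}$, the operator $\Ad^*_{\sigma(t)}$ is invertible with inverse $\Ad^*_{\sigma(t)^{-1}}$, whence $u(t) = \Ad^*_{\sigma(t)^{-1}}u_0$ and $\dot\sigma(t) = d_eR_{\sigma(t)}\Ad^*_{\sigma(t)^{-1}}u_0$. The final identity then follows from the formula $\Ad_{\sigma^{-1}} = d_\sigma L_{\sigma^{-1}}\, d_e R_{\sigma}$ read off from the definition of $\Ad$: taking metric adjoints and using that right-invariance makes $d_eR_\sigma$ an isometry (so $(d_eR_\sigma)^* = d_\sigma R_{\sigma^{-1}}$) gives $\Ad^*_{\sigma^{-1}} = d_\sigma R_{\sigma^{-1}}(d_\sigma L_{\sigma^{-1}})^*$, and composing on the left with $d_eR_\sigma$ yields $d_eR_\sigma\Ad^*_{\sigma^{-1}} = (d_\sigma L_{\sigma^{-1}})^* =: d_eL^*_{\sigma^{-1}}$.

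The hard part will not be the formal algebra but its justification in the ``almost'' Lie group setting: as noted after the definition of $\ad$, the maps $\Ad$ and $\ad$ lose a derivative, so differentiating $t\mapsto\Ad_{\sigma(t)}v$, interchanging the order of differentiation, and asserting that $t\mapsto\Ad^*_{\sigma(t)}u(t)$ is genuinely (not merely formally) differentiable must all be carried out on a scale of Sobolev spaces on which every term retains enough regularity. Since the statement is quoted from \cite{mp}, I would either invoke their regularity framework directly or first run the computation for smooth $\sigma$ and $u$ and then pass to the $H^s$ setting by density and continuity of the operators involved.
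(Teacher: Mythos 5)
Your argument is correct and is precisely the standard derivation underlying the cited result: differentiating $t\mapsto \Ad^*_{\sigma(t)}u(t)$ via $\tfrac{d}{dt}\Ad_{\sigma(t)}=\ad_{u(t)}\Ad_{\sigma(t)}$, invoking the Euler--Arnold equation to kill the bracketed term, and then inverting $\Ad^*_{\sigma(t)}$ and using right-invariance to identify $d_eR_{\sigma}\Ad^*_{\sigma^{-1}}$ with $\big(d_\sigma L_{\sigma^{-1}}\big)^*$. The paper does not reprove this lemma --- it simply quotes \cite[Corollary~3.3]{mp}, whose proof is this same computation --- and your closing caveat about justifying the formal algebra on the Sobolev scale is exactly the issue the paper defers to the remark following the lemma.
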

\begin{remark}
It is important to note at this point that Lemmas \ref{Lie group geodesic} \& \ref{Lie group conservation law} do not apply seamlessly to our setting of $H^s$ Sobolev diffeomorphism groups. As mentioned earlier, $\D^{s}(M)$ and its subgroups of interest are not Lie groups, on account of left translation only being continuous, etc. However, in any of the settings we consider in this paper, analogues of \eqref{Lie group geodesic equation} and \eqref{Lie group conservation law equation} will hold, {cf. Section 3 of \cite{mp}}.
\end{remark}

\par Notable examples of Euler-Arnold equations \eqref{Lie group geodesic equation} in the setting of diffeomorphism groups include the incompressible Euler equations in two and three dimensions, Burgers' equation, the Hunter-Saxton equation, the Camassa-Holm equation, the $\mu$CH equation as well as the Euler-$\alpha$ equations and the symplectic Euler equations. The associated Riemannian geometry of these equations as well as the existence of a $C^{\infty}$ exponential map and its properties has been studied extensively in the literature cf. \cite{em}, \cite{omori}, \cite{shnirelman1994}, \cite{emp}, \cite{shkoller}, \cite{mp}, \cite{ck2002}, \cite{ck2003}, \cite{klt}, \cite{ebin2012geodesics}, \cite{khesinsymplectic} and many others.

\section{Main Results}\label{main}
Let $\G^{s}$ be a group of diffeomorphisms of $M$ equipped with a (weak) Riemannian metric $\ip{\cdot , \cdot}$ and an associated exponential map where we assume that the results stated in Lemmas \ref{Lie group geodesic} \& \ref{Lie group conservation law} hold. Our main focus is the following question: if a geodesic $\gamma(t)$ emanating from the identity in $\G^{s}$ at some later time $t_0>0$ passes through $\eta \in \G^{s+k}$, can it be shown that $\gamma(t)$ evolves entirely in $\G^{s+k}$? For our purposes, it will be sufficient to assume that both $t_0=1$ and $k=1$. {References for the various commutator estimates involved in the calculations include Kato-Ponce \cite{kp} and Taylor \cite{taylorpseudo}.}

\par As mentioned in the introduction, previous results in this vein are due to Constantin and Kolev \cite{ck2002}, \cite{ck2003} and Kappeler, Loubet and Topalov \cite{klt}, who worked with compressible equations on the circle and the 2-torus respectively and Bruveris \cite{bruveris}. At the heart of our method lie the various conservation laws motivated by Lemma \ref{Lie group conservation law}.

\par In this paper we will concern ourselves primarily with the flat case $M = \T^n$. However, many of the constructions can be extended to the setting of curved spaces by collecting any lower order terms arising in the various calculations due to derivatives of the components of the metric and its Christoffel symbols on $M$ into a single term which will be negligible for our purposes. {It should be noted that we do not believe our results are sharp. We require varying conditions on the parameter $s$ due to the method of proof; all of which can be explained by Lemma \ref{sobolev}.}
\par We begin the groundwork for the main results. We aim to establish an explicit relationship between the regularity of the end configuration $\eta$ and the regularity of the intial velocity $u_0$. To do this we will make use of \eqref{Lie group conservation law equation}, suitably modified to each case considered below.

\begin{lemma}\label{workhorse}
Let $s>\frac{n}{2}+3$ and let $\gamma(t)$ be a smooth curve in $\G^s(\T^n)$ with $v(t) = d_{\gamma(t)} R_{\gamma(t)^{-1}}\big( \dot{\gamma}(t) \big) $, $\gamma(0) = e$ and $\gamma(1)=\eta$. We have the following identity
\begin{equation}\label{relationship}
    \Delta \eta = D\eta \int_{0}^{1} D\gamma(t)^{-1}  (P_\lambda v)\circ \gamma (t) \ dt + G
\end{equation}
where $G = G(\gamma)$ is of class $H^{s-1}$, {$\Delta \eta$ is defined by considering $\eta : \T^n \rightarrow \T^n$ in coordinates and applying $\Delta$ to each component and, for any $\lambda \in \R$, $P_\lambda$ is a differential operator acting component-wise given by:}
\begin{equation}
	P_\lambda(t) = \lambda - \sum_{i,j=1}^{n}p_{ij}(t)\partial_i\partial_j \quad \text{with} \quad p_{ij}(t) = \big( D\gamma  D\gamma^{\top} \big)_{ij} \circ \gamma^{-1}(t) 
\end{equation}
with $D\gamma^{\top}$ denoting the pointwise adjoint of $D\gamma$.
\end{lemma}
\begin{proof}Applying the Laplacian $\Delta$ to the tangent vector to the curve $\gamma(t)$, we get:
\begin{align*}
    \Delta \big(\frac{d \gamma}{dt}\big) &= \Delta(v\circ \gamma) = (Dv\circ \gamma )  \Delta \gamma - \lambda v \circ \gamma + \big(P_\lambda v\big) \circ \gamma
\end{align*}
Rearranging, we get:
\begin{equation}\label{laplacian}
    \begin{cases}
    \dfrac{d}{dt}\big(\Delta \gamma\big) - (Dv \circ \gamma)(\Delta \gamma) = (P_\lambda v ) \circ \gamma - \lambda v \circ \gamma\\
    \Delta \gamma (0) = 0. \\
    \end{cases}
\end{equation}
\n On the other hand, differentiating the flow equation $d \gamma/d t = v \circ \gamma$ in the spatial variables gives:
\begin{equation}\label{transport}
	\begin{cases}
	\dfrac{d}{dt}(D\gamma) - Dv \circ \gamma  D\gamma = 0 \\
	D\gamma(0)= \mathrm{Id}.\\
	\end{cases}
\end{equation}

\n Using \eqref{transport} and the Duhamel formula, we can now rewrite \eqref{laplacian} as an integral equation in the form:
\[
\Delta \gamma(t) = D\gamma(t) \int_{0}^{t} D\gamma(\tau)^{-1}  (P_\lambda v )\circ \gamma (\tau) \ d\tau + G(t)
\]
where 
$$G(t) = -\lambda D\gamma(t) \int_{0}^{t} D\gamma(\tau)^{-1}(v\circ \gamma)(\tau) \ d\tau$$
is a curve in $H^{s-1}(\T^n, \mathbb{R}^n)$. Evaluating at $t=1$ and denoting $G(1)$ by $G$ we arrive at \eqref{relationship}.
\end{proof}
\par Suppose now that $\gamma(t)$ is a geodesic of the metric $\ip{\cdot, \cdot}$ in $\G^s$ with $\gamma(0)=e$ and $\dot{\gamma}(0)=u_0 \in T_e\G^s$. Suppose further that at time $t_0=1$, $\gamma$ passes through $\eta$. Since $\Ad^{-1}_{\gamma(t)} = D\gamma(t)^{-1}  R_{\gamma(t)}$, using the conservation law \eqref{Lie group conservation law equation} and Lemma \ref{workhorse}, we may rewrite \eqref{relationship} as:
\begin{equation}\label{conjugation relationship}
    \Delta \eta = D\eta \int_{0}^{1} \Ad_{\gamma(t)}^{-1}  P_\lambda (t)  \big(\Ad^{-1}_{\gamma(t)}\big)^{*} u_0 \ dt + G
\end{equation}
where $*$ denotes the metric adjoint as before.
\par We can see from \eqref{conjugation relationship} that $P_\lambda$ will play a central role in establishing a relationship between the regularity of $\eta$ and our initial data $u_0$. To this end we establish that it defines a norm equivalent to the $H^1$ norm for sufficiently large $\lambda>0$.

\begin{lemma}\label{elliptic estimate} There exists a $\lambda > 0$ such that, for any $t \in [0,1]$ and $v \in H^{1}$, the operator $P_\lambda$ satisfies the estimate:
\begin{equation*}
\langle P_\lambda(t) v, \ v \rangle _{L^2} \  \simeq \norm{v} _{H^1}^2
\end{equation*}
{with the constants depending on the curve $\gamma : [0,1] \rightarrow \D^s_\mu(\T^n)$.}
\end{lemma}
\begin{proof}
We first derive an estimate for the coefficients $p_{ij}$. For any $w \in \R^n$, $t \in [0, 1]$ and $1 \leq i,j \leq n$, we have
\begin{align*}
    \sum_{i,j = 1}^n p_{ij}(t) w_i w_j &= w^{\top}  [D\gamma(t)][D\gamma(t)^{\top}] \circ \gamma^{-1}(t)  w \\
    &= \abs{D\gamma(t)^{\top} \circ \gamma^{-1}  w}^2.
\end{align*}
\n As $s>\frac{n}{2} + 3$, it follows from a compactness argument and the fact that $D\gamma \circ \gamma^{-1}$ is a linear isomorphism of $\R^n$ for all $t\in [0, 1]$ that:
\begin{equation}\label{pij}
    \sum_{i,j = 1}^n p_{ij}(t) w_i w_j \ \simeq \vert w \vert^2
\end{equation}
Integrating by parts and using estimate \eqref{pij}, we have:
    \begin{align*}
        \langle P_\lambda(t) v, \ v \rangle _{L^2} &= \lambda \ip{ v, v }_{L^2} - \sum_{i,j}\ip{ p_{ij}(t)\partial_i \partial_j v ,  v }_{L^2} \\
        &= \lambda \norm{v}_{L^2}^2 + \frac{1}{2} \int_{\T^n}\sum_{i,j} \partial_i p_{ij}(t) \partial_j \vert v \vert^2 \ dx + \int_{\T^n} \sum_{i,j}p_{ij}(t) \partial_jv  \partial_iv \ dx \\
        &= \lambda \norm{v}_{L^2}^2 - \frac{1}{2} \sum_{i,j}\ip{\partial_j\partial_i p_{ij}(t) v, v}_{L^2} + \int_{\T^n} \sum_{i,j}p_{ij}(t) \partial_jv  \partial_iv \ dx \\
        &\gtrsim \lambda \norm{v}_{L^2}^2 - \frac{1}{2} \sum_{i,j}\norm{\partial_j\partial_i p_{ij}(t) v}_{L^2}\norm{ v}_{L^2} + \int_{\T^n} \sum_{i} \vert \partial_iv_1 \vert^2 + \vert \partial_iv_2 \vert^2 \ dx \\
        &\gtrsim \norm{v}_{H^1}^2
    \end{align*}
	\n where the penultimate and last inequalities follow from the uniform boundedness in $t \in [0, 1]$ of the coefficients $\partial_i\partial_j p_{ij}$ and taking $\lambda$ sufficiently large.
	\par On the other hand using \eqref{pij} again we have
    \begin{align*}
        \langle P_\lambda(t) v, \ v \rangle _{L^2} &\lesssim \lambda \norm{v}_{L^2}^2 + \frac{1}{2} \sum_{i,j}\norm{\partial_j\partial_i p_{ij}(t) v}_{L^2}\norm{ v}_{L^2} + \int_{\T^n} \sum_{i} \vert \partial_iv_1 \vert^2 + \vert \partial_iv_2 \vert^2 \ dx \\
        & \lesssim \norm{v}_{H^1}^2.
    \end{align*}
\end{proof}
 
\subsection{Diffeomorphisms of the Flat $n$-Torus $\T^n$}\label{diffeomorphisms}
\par We are now ready to present our first theorem, which generalises to $n$ dimensions the corresponding results proved in \cite{ck2002}, \cite{ck2003} and \cite{klt}. Our method follows along the lines of the aforementioned. The main difference is the explicit use of the coadjoint operators and the conservation law \eqref{Lie group conservation law equation} which shortens the argument.

\begin{theorem}\label{main theorem compressible}
Let $r \geq 1$ be an integer, $s>\frac{n}{2}+2r+5$ and consider the space of orientation preserving diffeomorphisms of $\T^n$ equipped with a right-invariant $H^r$ metric \eqref{inertia}. Given $u_0 \in T_e\D^s$ let $\gamma(t)$ denote the corresponding $H^r$ geodesic. If at time $t=1$, $\gamma$ passes through a point $\eta \in \D^{s+1}$, then we have $u_0 \in T_e\D^{s+1}$ and consequently $\gamma(t)$ evolves entirely in $\D^{s+1}$.
\end{theorem}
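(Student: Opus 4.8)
The plan is to read the regularity of $u_0$ directly off the identity \eqref{conjugation relationship} by recognising its right-hand side as a second-order elliptic operator applied to $u_0$. Writing
\[
L u_0 := \int_{0}^{1} \Ad_{\gamma(t)}^{-1}  P_\lambda(t)  \big( \Ad_{\gamma(t)}^{-1}\big)^{*} u_0 \; dt,
\]
equation \eqref{conjugation relationship} becomes $D\eta \cdot (L u_0) = \Delta \eta - G$, that is $L u_0 = (D\eta)^{-1}(\Delta\eta - G)$. First I would record the mapping properties of the conjugation operators: since $\Ad_{\gamma(t)}^{-1} w = (D\gamma(t))^{-1}(w\circ\gamma(t))$ is a composition followed by a multiplication, both $\Ad_{\gamma(t)}^{-1}$ and its metric adjoint are operators of order zero that are bounded on the relevant Sobolev spaces, uniformly for $t\in[0,1]$. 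Moreover the outer composition by $\gamma(t)$ and the inner composition by $\gamma(t)^{-1}$ that these factors carry simply conjugate the differential operator $P_\lambda(t)$ into another second-order differential operator in the pulled-back coordinates. Hence $L$ is a (pseudo)differential operator of order two whose coefficients inherit the regularity of $\gamma(t)$ and $D\gamma(t)$, namely class $H^{s-1}$.

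Next I would establish that $L$ is uniformly elliptic. Its principal symbol is the $t$-average of the (positive-definite) symbols of the conjugated operators $\Ad_{\gamma(t)}^{-1} P_\lambda(t)(\Ad_{\gamma(t)}^{-1})^{*}$, the positivity being exactly the coefficient bound \eqref{pij} underlying Lemma \ref{elliptic estimate}, transported by the linear isomorphism $D\gamma(t)\circ\gamma(t)^{-1}$. Since an integral of positive-definite symbols is again positive-definite, $L$ is elliptic of order two. The same computation, now paired against $u_0$ and using $\gamma(0)=e$ together with continuity in $t$ to bound the integral from below, yields a coercive estimate of the form $\langle L u_0, u_0\rangle \gtrsim \norm{u_0}_{H^1}^2$; combined with self-adjointness this shows $L$ is an isomorphism, a fact I would keep in reserve for the exponential-map application, although for the present statement ellipticity alone suffices.

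With ellipticity in hand, the conclusion follows by a one-step regularity bootstrap. A priori $u_0\in H^s$, so $\gamma(t)\in\D^s$, $v(t)\in H^s$, and the coefficients of $L$ lie in $H^{s-1}$; the naive count would then only give $L u_0\in H^{s-2}$. However the right-hand side is better: $\eta\in\D^{s+1}$ gives $\Delta\eta\in H^{s-1}$ and $(D\eta)^{-1}\in H^s$, while $G\in H^{s-1}$ by Lemma \ref{workhorse}, and the product $H^s\cdot H^{s-1}\subset H^{s-1}$ since $s>\tfrac n2$; thus $L u_0\in H^{s-1}$. Differentiating $L u_0 = f$ and commuting $\partial_k$ through $L$, the commutator $[\partial_k,L]u_0$ puts one derivative on the $H^{s-1}$ coefficients acting on $u_0\in H^s$ and so lands in $H^{s-2}$; hence $L(\partial_k u_0)\in H^{s-2}$, and elliptic regularity for the order-two operator $L$ upgrades $\partial_k u_0$ from $H^{s-1}$ to $H^s$. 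As $k$ was arbitrary this gives $u_0\in H^{s+1}=T_e\D^{s+1}$, and feeding this improved datum back into the Ebin--Marsden-type persistence-of-regularity for the Cauchy problem of the $H^r$ geodesic equation shows, by uniqueness, that $\gamma(t)$ evolves entirely in $\D^{s+1}$.

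The hard part will be this bootstrap step, and specifically the borderline regularity of the coefficients of $L$: because $\gamma(t)$ is only known a priori to be of class $H^s$, these coefficients sit in $H^{s-1}$, which is precisely the threshold at which the commutator estimate and the elliptic gain from $H^{s-2}$ to $H^s$ remain valid (the relevant index $\tau=s-2$ sits exactly at $\sigma-1$ for coefficients in $H^{\sigma}=H^{s-1}$, which requires $s-1>\tfrac n2+1$, guaranteed by the hypothesis). Keeping careful track of these products and commutators, and verifying that the extra derivative genuinely transfers from the smoother boundary datum $\eta$ to $u_0$ rather than being absorbed by the coefficients, is where the argument must be executed with care; the margin encoded in $s>\tfrac n2+2r+5$ is what accommodates the derivative losses introduced once the inertia operator $A^r$ of order $2r$ is threaded through the adjoints defining $L$.
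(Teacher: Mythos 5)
Your outline tracks the paper's own strategy closely: both start from the identity \eqref{conjugation relationship}, both draw the ellipticity from the coercivity of $P_\lambda$ in Lemma \ref{elliptic estimate} via \eqref{pij}, and both transfer the extra derivative from $\Delta\eta\in H^{s-1}$ to $u_0$ by commuting $\partial_k$ through the integrated operator. The genuine gap sits exactly at the step you flag as the hard part: you invoke ``elliptic regularity for the order-two operator $L$'' to upgrade $\partial_k u_0$ from $H^{s-1}$ to $H^{s}$, and this is asserted rather than proved. The operator $L$ is not a differential operator, nor a classical pseudodifferential operator whose regularity theory you can quote: by \eqref{coadjoint con diffeomorphisms 2} the metric adjoint is $(\Ad_{\gamma(t)}^{-1})^{*_r}=A^{-r}R_{\gamma(t)}^{-1}\big(\det(D\gamma(t))D\gamma(t)^{\top}\big)^{-1}A^{r}$, so $L$ contains the nonlocal operators $A^{\pm r}$ sandwiched between compositions with the merely-$H^{s}$ maps $\gamma(t)^{\pm1}$ and multiplications by $H^{s-1}$ matrices. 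There is no off-the-shelf parametrix or rough-coefficient elliptic regularity theorem for such an object; supplying the missing regularity shift is precisely the content of the paper's proof, which (i) symmetrizes with $A^{r/2}$ so that the commutator $[A^{r/2},P_\lambda]$ falls into the error term via the Kato--Ponce estimate \eqref{taylor}, (ii) proves the base-level isomorphism $M\colon H^{r+1}\to H^{r-1}$ by Lax--Milgram from Lemma \ref{elliptic estimate}, and (iii) climbs to $H^{s+1}\to H^{s-1}$ by induction using the lemma $[M,\partial_j]\colon H^{k}\to H^{k-2}$, whose verification is where almost all the technical work lives. Your one-step bootstrap at the top level $s$ would have to be replaced by, or shown to reduce to, this induction from the coercive level upward.

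A second, smaller omission: you declare the metric adjoint to be ``an operator of order zero bounded on the relevant Sobolev spaces,'' but this is exactly where the hypothesis $r\ge 1$ is consumed (Remark \ref{main obstacle}). For $(\Ad_{\gamma(t)}^{-1})^{*_r}$ to preserve $H^{s+1}$, the operator $A^{r}$ must first lower the regularity to at most $H^{s-1}$ before the multiplication by the $H^{s-1}$ entries of $D\gamma(t)$ occurs, i.e.\ one needs $2r\ge 2$. Your argument never uses $r\ge1$, so as written it would equally ``prove'' the theorem for the $L^2$ metric on the full diffeomorphism group, where the adjoint genuinely loses derivatives and the conclusion is not claimed.
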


\par Recall from \cite{mp}, Section 3, that the group coadjoint on $\D^s(\T^n)$ equipped with a $H^r$ metric has the form:
$$\Ad^{*_r} _\eta v = A^{-r}\big( \det(D\eta)  D\eta^{\top}  \big(A^r v \circ \eta \big)\big)$$
\par Hence we have the following version of \eqref{Lie group conservation law equation}:
\begin{equation}\label{coadjoint con diffeomorphisms 2}
    u(t) = \big(\Ad_{\gamma(t)}^{-1}\big)^{*_r} u_0 = A^{-r}  R_{\gamma(t)}^{-1}  \big(\det\big(D\gamma(t)\big)  D\gamma(t)^{\top}\big)^{-1}  A^r u_0
\end{equation}

\begin{remark}\label{main obstacle}
Later in our argument we will require that, for each $t \in [0,1]$, $\big(\Ad_{\gamma(t)}^{-1}\big)^{*_r} : T_e\D^{s+1} \rightarrow T_e\D^{s+1}$ be a bounded invertible linear operator, so it is at this point that we can explicitly see the necessity of $r\geq 1$. A simple derivative count shows that $A^r$ must be at least of order $2$ to prevent a loss of derivatives coming from the multiplication by the coefficients of the matrix $D\gamma(t)$ which are apriori only of class $H^{s-1}$ {(recall that only the endpoints of $\gamma(t)$ will be assumed to be of class $H^{s+1}$)}.
\end{remark}

\begin{proof}[Proof of Theorem ~\ref{main theorem compressible}]
The proof consists of three stages. We begin by combining \eqref{conjugation relationship} with some commutator estimates to acquire an expression of the form $\Delta \eta = D \eta M u_0 + \widetilde{G}$, where $\widetilde{G}$ is of class $H^{s-1}$. We then establish that, for $v \in H^s$, $Mv$ being of class $H^{s-1}$ implies that $v$ is of class $H^{s+1}$. We finish by concluding that, as $Mu_0 = D \eta^{-1}\big(\Delta \eta - \widetilde{G}\big) \in H^{s-1}$, we have that $u_0 \in H^{s+1}$. Hence, by the results for the initial value problem, we have that $\gamma(t)$ evolves entirely in $H^{s+1}$ and the proof will be complete. Recall \eqref{conjugation relationship}:
\begin{align*}
        \Delta \eta &= D\eta \int_{0}^{1} \Ad_{\gamma(t)}^{-1}  P_\lambda (t)  \big(\Ad^{-1}_{\gamma(t)}\big)^{{*_r}} u_0 \ dt + G
\end{align*}
where again $G$ is of class $H^{s-1}$. Introducing a commutator term $P_\lambda = A^{-\frac{r}{2}}  P_\lambda (t)  A^{\frac{r}{2}} + A^{-\frac{r}{2}}  \big[ A^{\frac{r}{2}}, P_\lambda (t)\big]$ we have
\begin{align*}
        \Delta \eta &= D\eta \int_{0}^{1} \Ad_{\gamma(t)}^{-1}  A^{-\frac{r}{2}}  P_\lambda (t)  A^{\frac{r}{2}}  \big(\Ad^{-1}_{\gamma(t)}\big)^{{*_r}} u_0 \ dt + D\eta \int_{0}^{1} \Ad_{\gamma(t)}^{-1}  A^{-\frac{r}{2}}  \big[ A^{\frac{r}{2}}, P_\lambda (t)\big]  \big(\Ad^{-1}_{\gamma(t)}\big)^{{*_r}} u_0 \ dt + G
\end{align*}
Notice now that $\big[ A^{\frac{r}{2}}, P_\lambda (t)\big] = \big[ p^{ij}, A^{\frac{r}{2}}\big] \partial_i \partial_j$ and, as $p^{ij}$ are of class $H^{s-1}$, we have that $\big[ A^{\frac{r}{2}}, P_\lambda (t)\big] : H^{s} \rightarrow H^{s-r-1}$ by {the following Kato-Ponce \cite{kp} type commutator estimate, cf. \cite{taylorpseudo}}:
\begin{equation}\label{taylor}
    \norm{A^{\frac{r}{2}}(fg) - fA^{\frac{r}{2}}g}_{H^{s-r-1}} \lesssim \norm{\nabla f}_{\infty}  \norm{g}_{H^{s-2}} + \norm{f}_{H^{s-1}} \norm{g}_{\infty}
\end{equation}
{Hence, the term $\displaystyle D\eta \int_{0}^{1} \Ad_{\gamma(t)}^{-1}  A^{-\frac{r}{2}}  \big[ A^{\frac{r}{2}}, P_\lambda (t)\big]  \big(\Ad^{-1}_{\gamma(t)}\big)^{{*_r}} u_0 \ dt$ belongs to $H^{s-1}$ and we rewrite:}
\begin{align*}
        \Delta \eta &= D\eta \int_{0}^{1} \Ad_{\gamma(t)}^{-1}  A^{-\frac{r}{2}}  P_\lambda (t)  A^{\frac{r}{2}}  \big(\Ad^{-1}_{\gamma(t)}\big)^{{*_r}} u_0 \ dt + \widetilde{G}
\end{align*}
where $\displaystyle \widetilde{G} := D\eta \int_{0}^{1} \Ad_{\gamma(t)}^{-1}  A^{-\frac{r}{2}}  \big[ A^{\frac{r}{2}}, P_\lambda (t)\big]  \big(\Ad^{-1}_{\gamma(t)}\big)^{{*_r}} u_0 \ dt + G$ is of class $H^{s-1}$. \\
\par Next, we claim that the operator defined by:
\begin{equation}\label{M}
Mv = \int_{0}^{1} M_t v \ dt := \int_{0}^{1} \Ad_{\gamma(t)}^{-1}  A^{-\frac{r}{2}}  P_\lambda (t)  A^{\frac{r}{2}}  \big(\Ad^{-1}_{\gamma(t)}\big)^{{*_r}} v \ dt
\end{equation}
is a linear isomorphism from $H^{s+1}$ to $H^{s-1}$. We can see that, for $r+1 \leq k \leq s+1$, $M : H^{k} \rightarrow H^{k-2}$ is a bounded linear operator, as the integrand is a composition of bounded linear operators. Consider the case $k = r+1$, and define the bilinear form:
\[
\Lambda : H^{r+1} \times H^{r+1} \rightarrow \R \quad ; \quad \Lambda(v,w) = \langle Mv , w \rangle_{H^r}
\] 
\n It follows from the boundedness of $M$ that $\Lambda$ is bounded. Furthermore, by Lemma \ref{elliptic estimate} we have:
\begin{align*}
	\Lambda(v,v) &= \int_0^{1} \Big\langle A^{-\frac{r}{2}} P_\lambda(t)  A^{\frac{r}{2}}  \big(\Ad^{-1}_{\gamma(t)}\big)^{{*_r}} v \ , \ \big(\Ad^{-1}_{\gamma(t)}\big)^{{*_r}} v \Big\rangle_{H^r} \ dt \\
	&= \int_0^{1} \Big\langle P_\lambda(t)  A^{\frac{r}{2}}  \big(\Ad^{-1}_{\gamma(t)}\big)^{{*_r}} v \ , \ A^{\frac{r}{2}} \big(\Ad^{-1}_{\gamma(t)}\big)^{{*_r}} v \Big\rangle_{L^2} \ dt \\
	&\gtrsim \int_0^{1} \norm{A^{\frac{r}{2}}  \big(\Ad^{-1}_{\gamma(t)}\big)^{{*_r}} v}_{H^1}^2 \ dt \\
	&\gtrsim \norm{ \big(\Ad^{-1}_{\gamma(t)}\big)^{{*_r}} v}_{H^{r+1}}^2 \\
	&\gtrsim \norm{v}_{H^{r+1}}^2
\end{align*}
Hence, by the Lax-Milgram theorem, we have that $M : H^{r+1} \rightarrow H^{r-1}$ is a linear isomorphism. We now proceed by induction. Assume $M : H^{k} \rightarrow H^{k-2}$ is a linear isomorphism for some $r+1 \leq k \leq s$, and let $g \in H^{k-1}$. By the induction hypothesis, there exists $f \in H^{k}$ such that $Mf = g$. We claim that $f$ is in fact of class $H^{k+1}$. For $j = 1,...,n$ consider:
\begin{align*}
    \partial_j f &= \partial_j M^{-1}g \\
    &= M^{-1} \partial_j g \ + \ [\partial_j, M^{-1}]g \\
    &= M^{-1} \partial_j g \ + \ M^{-1}[M, \partial_j]M^{-1}g \\
    &= M^{-1} \partial_j g \ + \ M^{-1}[M, \partial_j]f.
\end{align*}
By assumption, the first term on the RHS is in $H^{k}$. As for the commutator term, from \eqref{M} is suffices to show that for $j = 1,...,n$ and $r+1 \leq k \leq s$, $[M_t, \partial_j] : H^{k} \rightarrow H^{k-2}$. This will give us that $[M, \partial_j] : H^{k} \rightarrow H^{k-2}$, and hence $\partial_j f \in H^{k}$.
\par First, for an operator $B$, define $C_{\gamma}( B ) = R_{\gamma}  B  R_{\gamma}^{-1}$, and its inverse $\widetilde{C}_{\gamma}( B ) = R_{\gamma}^{-1}  B  R_{\gamma}$. Then, using \eqref{coadjoint con diffeomorphisms 2}, the fact that $[A^{r}, \partial_j]=0$ and some standard commutator algebra, we have:
\begin{align*}
    \big[M_t, \partial_j\big] &= \big[D\gamma  R_{\gamma}  A^{-\frac{r}{2}}  P_{\lambda}  A^{-\frac{r}{2}}  R_{\gamma}^{-1}  \big(\det\big(D\gamma\big)  D\gamma^{\top}\big)^{-1}  A^r , \partial_j\big] \\
    &= \big[D\gamma  C_{\gamma} \big( A^{-\frac{r}{2}}  P_{\lambda}  A^{-\frac{r}{2}} \big)  \big(\det\big(D\gamma\big)  D\gamma^{\top}\big)^{-1}, \partial_j\big]  A^r \\
    &= \big[D\gamma, \partial_j\big]  C_{\gamma} \big( A^{-\frac{r}{2}}  P_{\lambda}  A^{-\frac{r}{2}} \big)  \big(\det\big(D\gamma\big)  D\gamma^{\top}\big)^{-1}  A^r \\
    & \qquad + D\gamma  C_\gamma(A^{-\frac{r}{2}}  P_{\lambda}  A^{-\frac{r}{2}})  \big[\big(\det\big(D\gamma\big)  D\gamma^{\top}\big)^{-1}, \partial_j\big]  A^r \\
    & \qquad + D\gamma  \big[C_\gamma(A^{-\frac{r}{2}}  P_{\lambda}  A^{-\frac{r}{2}}), \partial_j\big]  \big(\det\big(D\gamma\big)  D\gamma^{\top}\big)^{-1}  A^r.
\end{align*}
{The first and second term in the sum} mapping to the correct space follows from the fact that, for $f \in H^{s-1}$, we have $\big[f,\partial_j\big] : H^{k-2} \rightarrow H^{k-2}$ and $H^{k-2r} \rightarrow H^{k-2r}$, which is clear. All that remains is to show that $\big[C_\gamma(A^{-\frac{r}{2}}  P_{\lambda}  A^{-\frac{r}{2}}), \partial_j\big]: H^{k-2r} \rightarrow H^{k-2}$, which is equivalent to showing $\big[A^{-\frac{r}{2}}  P_{\lambda}  A^{-\frac{r}{2}}, \widetilde{C}_\gamma(\partial_j)\big] = \big[A^{-\frac{r}{2}}  P_{\lambda}  A^{-\frac{r}{2}}, \partial_j\gamma^i \circ \gamma^{-1} \partial_i\big]: H^{k-2r} \rightarrow H^{k-2}$. Let $f_j^i = \partial_j\gamma^i \circ \gamma^{-1} \in H^{s-1}$. Now we have:
\begin{align*}
    \big[A^{-\frac{r}{2}}  P_{\lambda}  A^{-\frac{r}{2}}, f_j^k \partial_i\big]
    &= A^{-\frac{r}{2}}  \big[f^k_j, A^{\frac{r}{2}}\big]  \partial_i  A^{-\frac{r}{2}}  P_\lambda  A^{-\frac{r}{2}} + A^{-\frac{r}{2}}  \big[P_\lambda, f^k_j \partial_i\big]  A^{-\frac{r}{2}} + A^{-\frac{r}{2}}  P_\lambda  A^{-\frac{r}{2}}  \big[f^k_j, A^{\frac{r}{2}}\big]  \partial_i  A^{-\frac{r}{2}}
\end{align*}
and the result follows by observing that:
\begin{itemize}
    \item $\big[P_\lambda, f_j^k \partial_i\big]$ is a second order differential operator with coefficients in $H^{s-3}$
    \item $\big[f_j^k,A^{\frac{r}{2}}\big]: H^{k-3} \rightarrow H^{k-r-2}$
    \item $\big[f_j^k,A^{\frac{r}{2}}\big]: H^{k-r-1} \rightarrow H^{k-2r}$
\end{itemize}
where the latter two follow by commutator estimates analogous to \eqref{taylor}. Hence $\partial_j f \in H^{k}$, which implies that $f \in H^{k+1}$, and by the induction argument $M : H^{s+1} \rightarrow H^{s-1}$ is a linear isomorphism. \\
\par Finally, we have:
\begin{align*}
    \Delta \eta &=  D\eta  M (u_0) + \widetilde{G},
\end{align*}
which implies
\begin{align*}
    u_0 = M^{-1} \big( D\eta^{-1} \big( \Delta \eta - \widetilde{G} \big) \big) \in H^{s+1}
\end{align*}
and the proof of Theorem \ref{main theorem compressible} is complete.
\end{proof}

\subsection{Volume-Preserving Diffeomorphisms of the Flat 2-Torus $\T^2$}\label{volumorphisms} The main result presented in this section is as follows:
\begin{T1}
Let $s > 6$ and consider the group of volume-preserving diffeomorphisms of $\T^2$ equipped with a right-invariant $L^2$ metric. Given $u_0 \in T_e\D^s_\mu$ let $\gamma(t)$ denote the corresponding $L^2$ geodesic. If at time $t=1$, $\gamma$ passes through a point $\eta \in \D^{s+1}_\mu$, then we have $u_0 \in T_e\D^{s+1}_\mu$ and consequently $\gamma(t)$ evolves entirely in $\D^{s+1}_\mu$.
\end{T1}
\par For the proof we make use of the conservation of vorticity in 2D, which takes the form: 
\begin{equation}\label{conservation of vorticity}
    \rot (u)\circ \gamma = \rot (u_0),
\end{equation}
where $u$ is the Eulerian velocity of the flow $\gamma$, cf. \eqref{Lie group conservation law equation}. Applying the symplectic gradient $\symp := (-\partial_2, \partial_1)$ to both sides of \eqref{conservation of vorticity} we obtain the following conservation law.
\begin{equation} \label{euler con}
	\Delta u = \Ad_{\gamma} \left( \Delta u_0 \right) 
\end{equation}
Now, recall by the Hodge Decomposition Theorem \eqref{hodge} that we may decompose the tangent space $T_e\D^s_\mu$ into an $L^2$-orthogonal sum:
\begin{equation*}
    T_e\D^s_\mu = T_e\D^s_{\mu,ex} \oplus \mathcal{H}
\end{equation*}
where $T_e\D^s_{\mu,ex} = g^\sharp\big(d \delta H^{s+2}(T^{*}\T^2)\big)$ is known as the space of exact volume-preserving vector fields and again $\mathcal{H}$ denotes the space of harmonic vector fields. Hence, we can rewrite $u_0 = \symp f_0 + h_0$, where $f_0 \in H^{s+1}_0(\T^2, \R):= \big\{ g \in H^{s+1}(\T^2, \R) \ \vert \ \widehat{g}(0) = 0 \big\}$ and $h_0 \in \mathcal{H}$. So  we may reformulate \eqref{euler con} as:
\begin{lemma} For $u_0 = \symp f_0 + h$, $\gamma$ and $u$ as above, we have:
\begin{equation}\label{euler con exact}
    u(t) = \symp \big( \Delta^{-1} R_{\gamma(t)}^{-1} \Delta f_0  \big) + h(t)
\end{equation}
where $h(t)$ evolves in $\mathcal{H}$ and $\Delta^{-1} : H^{s-1}_0(\T^2, \R) \rightarrow H^{s+1}_0(\T^2, \R)$, defined in frequency space by:
\[
  \widehat{\Delta^{-1}f}(\xi) :=
  \begin{cases}
     0 & \text{if $\xi = (0,0)$} \\
     \dfrac{\widehat{f}(\xi)}{\vert \xi \vert^2} & \text{if $\xi \in \Z^2 \setminus {(0,0)}$}
  \end{cases}
\]
is a linear isomorphism.
\end{lemma}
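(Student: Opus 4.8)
The plan is to reduce the vector identity \eqref{euler con} to a scalar transport statement for the stream function and then invert the Laplacian. At each time write the Hodge decomposition of the (divergence-free) Eulerian velocity as $u(t) = \symp f(t) + h(t)$, with $f(t) \in H^{s+1}_0(\T^2,\R)$ normalised to have zero mean and $h(t) = \pi_{\mathcal H} u(t) \in \mathcal H$. Since $u(t)$ stays tangent to $\D^s_\mu$ this decomposition is available for every $t$, so the assertion that ``$h(t)$ evolves in $\mathcal H$'' is immediate from the Hodge theorem. Because harmonic fields are annihilated by $\Delta$ and $\symp$ commutes with $\Delta$, we have $\Delta u = \symp \Delta f$ and $\Delta u_0 = \symp \Delta f_0$, so \eqref{euler con} reads $\symp \Delta f(t) = \Ad_{\gamma(t)}\big(\symp \Delta f_0\big)$.

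The key algebraic step I would isolate is the intertwining identity
\[
\Ad_{\gamma}\circ \symp = \symp \circ R_{\gamma}^{-1} \qquad \text{on scalars, for volume-preserving } \gamma .
\]
Unwinding the definitions and using the chain rule, $\Ad_\gamma(\symp g) = (D\gamma\circ\gamma^{-1})\,J\,(\nabla g\circ\gamma^{-1})$ while $\symp(g\circ\gamma^{-1}) = J\,D(\gamma^{-1})^{\top}(\nabla g\circ \gamma^{-1})$, where $J$ denotes rotation by $\pi/2$; writing $B = D\gamma\circ\gamma^{-1}$ so that $D(\gamma^{-1})^{\top} = B^{-\top}$, the two expressions agree precisely because of the $2\times 2$ adjugate relation $B J = J\,B^{-\top}$, valid exactly when $\det B = 1$. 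Applying this with $g = \Delta f_0$ turns the displayed form of \eqref{euler con} into $\symp \Delta f(t) = \symp\big(R_{\gamma(t)}^{-1}\Delta f_0\big)$. Since the kernel of $\symp$ is the constants, this forces $\Delta f(t) = R_{\gamma(t)}^{-1}\Delta f_0 + c(t)$; integrating over $\T^2$ — both sides being exact derivatives, and $\gamma(t)$ preserving $\mu$ so that $R_{\gamma(t)}^{-1}$ preserves the mean — shows $c(t)=0$. Inverting the Laplacian on mean-zero functions then gives $f(t) = \Delta^{-1}R_{\gamma(t)}^{-1}\Delta f_0$, which is \eqref{euler con exact}.

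It remains to verify that $\Delta^{-1}$ is the claimed isomorphism, and here I would simply read off the Fourier multiplier. On $H^{s-1}_0(\T^2,\R)$ the operator multiplies the $\xi$-th Fourier coefficient by $\abs{\xi}^{-2}$ for $\xi\neq 0$, and since $\norm{f}_{H^t}^2 \simeq \sum_{\xi}(1+\abs{\xi}^2)^t\abs{\widehat f(\xi)}^2$, the comparison $(1+\abs{\xi}^2)^{s+1}\abs{\xi}^{-4}\simeq (1+\abs{\xi}^2)^{s-1}$ for $\xi\neq 0$ shows that $\Delta^{-1}$ maps $H^{s-1}_0$ boundedly and bijectively onto $H^{s+1}_0$, with bounded inverse $\Delta$ itself. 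The mean-zero constraint is exactly what removes the degenerate $\xi=0$ mode, and it is preserved by $R_{\gamma(t)}^{-1}$, so $R^{-1}_{\gamma(t)}\Delta f_0 \in H^{s-1}_0$ lies in the domain.

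I expect the only genuine obstacle to be the intertwining identity $\Ad_\gamma\circ\symp = \symp\circ R_\gamma^{-1}$: everything else is Hodge-theoretic bookkeeping and a one-line Fourier computation, but this identity is where the volume-preserving hypothesis $\det D\gamma = 1$ is essentially used, and where the sign and transpose conventions for $\symp$, $\nabla$ and $\Ad$ must be tracked with care. Once it is established, cancelling $\symp$ and inverting $\Delta$ are routine.
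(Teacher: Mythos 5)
Your proposal is correct and follows essentially the same route as the paper: apply \eqref{euler con}, note that $\Delta$ kills the harmonic part and commutes with $\symp$, use the intertwining identity $\Ad_\gamma(\symp g)=\symp(R_\gamma^{-1}g)$ (which the paper asserts in one line and you justify via the $2\times 2$ adjugate relation requiring $\det D\gamma=1$), and invert $\Delta$ on mean-zero functions. The extra care you take with the constant $c(t)$ and the Fourier-multiplier verification of the isomorphism fills in details the paper leaves implicit, but does not change the argument.
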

\begin{proof}
Using \eqref{euler con} we have:
\begin{align*}
    \Delta u &= \Ad_\gamma (\Delta u_0) \\
    &= \Ad_\gamma \big(\Delta (\symp f_0 + h)\big) \\ 
    &= \Ad_\gamma(\symp \Delta f_0) \\
    & = \symp (R_\gamma^{-1} \Delta f_0)
\end{align*}
which, as $\Delta$ acts component-wise on vector fields, immediately yields \eqref{euler con exact}.
\end{proof}
Formula \eqref{euler con exact} will be more convenient for our purposes than \eqref{conservation of vorticity}.
\begin{proof}[Proof of Theorem ~\ref{main theorem euler}] The proof consists of three stages. We begin by combining \eqref{conjugation relationship} and \eqref{euler con exact}, along with some commutator estimates to acquire an expression of the form $\Delta \eta = D \eta M \symp \Delta f_0 + \widetilde{G}$, where $\widetilde{G}$ is of class $H^{s-1}$. We then establish that, for $v \in H_0^{s-2}$, $Mv$ being of class $H^{s-1}$ implies that $v$ is of class $H_0^{s-1}$. We finish by concluding that, as $M \symp \Delta f_0 = D \eta^{-1}\big(\Delta \eta - \widetilde{G}\big) \in H^{s-1}$, we have that $f_0 \in H^{s+2}$, and hence $u_0 \in H^{s+1}$. Then, by the results for the initial value problem, we have that $\gamma(t)$ evolves entirely in $H^{s+1}$ and the proof will be complete. Combining \eqref{conjugation relationship} and \eqref{euler con exact}:
\begin{align*}
    \Delta \eta &= D\eta \int_{0}^{1} D\gamma(t)^{-1}  R_{\gamma(t)}  P_\lambda(t) \symp \big( \Delta^{-1} R_{\gamma(t)}^{-1} \Delta f_0  \big)\ dt + D\eta \int_{0}^{1} D\gamma(t)^{-1}  R_{\gamma(t)}  P_\lambda(t) h(t) \ dt + G
\end{align*}
where again $G$ is of class $H^{s-1}$. Some straightforward calculus now yields:
\begin{align*}
    \Delta \eta &= D\eta \int_{0}^{1} D\gamma(t)^{-1}  R_{\gamma(t)}  P_\lambda(t) \Delta^{-1} R_{\gamma(t)}^{-1} \big( D\gamma(t) \symp \Delta f_0 \big) \ dt + D\eta \int_{0}^{1} D\gamma(t)^{-1}  R_{\gamma(t)}  P_\lambda(t) h(t) \ dt + G \\
    &= D\eta \int_{0}^{1} R_{\gamma(t)} \Gamma^{-1}(t) P_\lambda(t) \Delta^{-1} \Gamma(t) R_{\gamma(t)}^{-1} \symp \Delta f_0  \ dt + G 
\end{align*}
where $G$ has absorbed the term $\displaystyle D\eta \int_{0}^{1} D\gamma(t)^{-1}  R_{\gamma(t)}  P_\lambda(t) h(t) \ dt$ and is still of class $H^{s-1}$ and we denote $\Gamma:=D\gamma \circ \gamma^{-1}$ for notational simplicity. Next, we introduce commutator terms in order to achieve a more advantageous symmetry. We recall the Hodge projections:
\begin{equation*}
    \pi_{0} : T_e\D^{\sigma} \rightarrow T_e\D^{\sigma}_{\mu,ex} \oplus \nabla H^{\sigma+1} \ \text{ and } \ \pi_\mathcal{H} : T_e\D^{\sigma} \rightarrow \mathcal{H},
\end{equation*}
cf. \eqref{mean-zero and harmonic projections} and rewrite:
\begin{align*}
    \Delta \eta 
    &= D\eta \int_{0}^{1} R_{\gamma(t)}  \Delta^{-1/2} \pi_{0} P_\lambda(t)  \Delta^{-1/2}  R_{\gamma(t)}^{-1} \big( \symp \Delta f_0 \big) \ dt + \ D\eta \int_{0}^{1} R_{\gamma(t)}  \Omega(t)  R_{\gamma(t)}^{-1} \big( \symp \Delta f_0 \big) \ dt + G
\end{align*}
where
\begin{align*}
    \Omega(t) &= \Gamma^{-1}(t) P_\lambda(t) \Delta^{-1} \Gamma(t) - \Delta^{-1/2} \pi_{0} P_\lambda(t)  \Delta^{-1/2}.
\end{align*}
We claim the integral involving $\Omega(t)$ is of class $H^{s-1}$ and hence can be absorbed into $G$. Observe that $\Gamma(t)$ and its inverse have entries in $H^{s-1}$. We suppress $t$ and continue
\begin{align*}
    \Omega &= \Gamma^{-1} P_\lambda  \Delta^{-1}  \Gamma - \Delta^{-1/2} \pi_0 P_\lambda  \Delta^{-1/2} \\
    &= \Gamma^{-1} P_\lambda  \Delta^{-1}  \Gamma - P_\lambda \Delta^{-1} + P_\lambda \Delta^{-1} - \Delta^{-1/2} \pi_0 P_\lambda  \Delta^{-1/2} \\
    &= \Gamma^{-1} \big[ P_\lambda  \Delta^{-1}, \Gamma \big] + \pi_0 P_\lambda \Delta^{-1} + \pi_\mathcal{H} P_\lambda \Delta^{-1} - \Delta^{-1/2} \pi_0 P_\lambda  \Delta^{-1/2}
\end{align*}
As $\pi_0 = \Delta^{-1/2} \pi_0 \Delta^{1/2}$, we have:
\begin{align*}
    \Omega &= \Gamma^{-1} \big[ P_\lambda  \Delta^{-1}, \Gamma \big] + \Delta^{-1/2} \pi_0 \Delta^{1/2} P_\lambda \Delta^{-1} - \Delta^{-1/2} \pi_0 P_\lambda  \Delta^{-1/2} + \pi_\mathcal{H} P_\lambda \Delta^{-1} \\
    &= \Gamma^{-1} \big[ P_\lambda  \Delta^{-1}, \Gamma \big] + \Delta^{-\frac{1}{2}} \pi_0 \big[ \Delta^{\frac{1}{2}}, P_\lambda \big] \Delta^{-1} + \pi_\mathcal{H} P_\lambda \Delta^{-1}
\end{align*}
Examining the terms separately we have:
\begin{align*}
    \big[ P_\lambda  \Delta^{-1}, \Gamma \big] &= \lambda \big[ \Delta^{-1} , \Gamma \big] - [p^{ij}\partial_i\partial_j\Delta^{-1}, \Gamma] \\
    &= \lambda \big[ \Delta^{-1} , \Gamma \big] - [p^{ij} \Delta^{-1} \partial_i\partial_j, \Gamma] \\
    &= \lambda \big[ \Delta^{-1} , \Gamma \big] - p^{ij} \Delta^{-1} \partial_i\partial_j \Gamma + \Gamma p^{ij} \Delta^{-1} \partial_i\partial_j
\end{align*}
Commuting $p^{ij}$ with $\Gamma$ and introducing commutator terms, we have:
\begin{align*}
    \big[ P_\lambda  \Delta^{-1}, \Gamma \big] &= \lambda \big[ \Delta^{-1} , \Gamma \big] - p^{ij} \Delta^{-1} \partial_i\partial_j \Gamma + p^{ij} \Gamma \Delta^{-1} \partial_i\partial_j \\
    &= \lambda \big[ \Delta^{-1} , \Gamma \big] - p^{ij} \Delta^{-1} \partial_i\partial_j \Gamma + p^{ij} \Delta^{-1} \Gamma \partial_i\partial_j - p^{ij} \Delta^{-1} \Gamma \partial_i\partial_j + p^{ij} \Gamma \Delta^{-1} \partial_i\partial_j \\
    &= \lambda \big[ \Delta^{-1} , \Gamma \big] - p^{ij} \Delta^{-1} [\partial_i\partial_j, \Gamma] - p^{ij} [\Delta^{-1}, \Gamma] \partial_i\partial_j \\
    &= \lambda \Delta^{-1} \big[\Gamma, \Delta \big] \Delta^{-1} - p^{ij} \Delta^{-1} [\partial_i\partial_j, \Gamma] - p^{ij} \Delta^{-1} \big[\Gamma, \Delta \big] \Delta^{-1} \partial_i\partial_j 
\end{align*}
We can see by direct calculation of $\big[\Gamma, \Delta \big]$ and $\big[\partial_i \partial_j, \Gamma\big]$ that these terms map $H^{s-2}$ to $H^{s-3}$. As for the term $\Delta^{-\frac{1}{2}} \pi_0 \big[ \Delta^{\frac{1}{2}}, P_\lambda \big] \Delta^{-1}$ we have
\begin{align*}
    [\Delta^{\frac{1}{2}}, P_\lambda] &= [\Delta^{\frac{1}{2}}, p^{ij}]\partial_i\partial_j
\end{align*}
and, as $p^{ij}$ are of class $H^{s-1}$, the fact that this maps $H^{s}$ to $H^{s-2}$ is a consequence of the following commutator estimate, cf. \cite{taylorpseudo}:
\begin{equation}\label{kp laplacian estimate}
    \norm{\Delta^{1/2}(fg) - f\Delta^{1/2}g}_{H^{s-2}} \lesssim \norm{\nabla f}_{\infty}\norm{g}_{H^{s-2}} + \norm{f}_{H^{s-1}}\norm{g}_{\infty}
\end{equation}
Lastly, observe that the term $\pi_\mathcal{H} P_\lambda \Delta^{-1}$ maps $H^{s-2} \rightarrow C^\infty$. Hence we have
\begin{align}\label{eqn}
    \Delta \eta &= D\eta \int_{0}^{1} R_{\gamma(t)}  \Delta^{-1/2} \pi_0 P_\lambda(t)  \Delta^{-1/2}  R_{\gamma(t)}^{-1} \big( \symp \Delta f_0 \big) \ dt + \widetilde{G}
\end{align}
where now $\displaystyle \widetilde{G} = G + D\eta \int_{0}^{1} R_{\gamma(t)}  \Omega(t)  R_{\gamma(t)}^{-1} \big( \symp \Delta f_0 \big) \ dt$ evolves in $H^{s-1}$.
\par Next we consider the linear operator defined by:
\begin{equation}\label{M Euler}
Mv = \int_0^{1} M_t v \ dt := \int_{0}^{1} R_{\gamma(t)}  \Delta^{-1/2} \pi_0 P_\lambda(t)  \Delta^{-1/2}  R_{\gamma(t)}^{-1} v \ dt
\end{equation}
The boundedness of $M$ on $H^{k}_0$ for $0\leq k \leq s-1$ follows from the uniform boundedness in $t\in[0,1]$ of the operators comprising the integrand $M_t$. Notice now that \eqref{eqn} can be rearranged as:
\begin{equation}\label{M eqn}
    M(\symp \Delta f_0) = D\eta^{-1} \big( \Delta \eta - \widetilde{G} \big) \in H^{s-1}(\T^2, \R^2)
\end{equation}
Our goal is now to show that, for $0 \leq k \leq s-1$, if $M v \in H^{k}$, then $v \in H^k_0$.
For $k=0$ we have from Lemma \ref{elliptic estimate} and the $L^2$-orthogonality in the Hodge decomposition \eqref{hodge} that:
\begin{align*}
    \norm{M v}_{L^2}\norm{v}_{L^2} &\geq \big\vert \ip{Mv, v}_{L^2} \big\vert \\
    &= \bigg\vert \int_0^1 \ip{\pi_0 P_\lambda(t)  \Delta^{-1/2}  R_{\gamma(t)}^{-1}v \ , \  \Delta^{-1/2}  R_{\gamma(t)}^{-1}v}_{L^2} \ dt \bigg\vert \\
    &\gtrsim \bigg\vert \int_0^1 \norm{  \Delta^{-1/2}  R_{\gamma(t)}^{-1}v}^2_{H^1} \ dt \bigg\vert \\
    &\simeq \norm{v}^2_{L^2}
\end{align*}
From which it follows that if $Mv \in L^2 $, then $v \in L^2$. Now assume the lemma holds for some $k$ with $0 \leq k \leq s-2$ and $Mv \in H^{k+1}$. By the inductive hypothesis, $v \in H^{k}_0$. Furthermore, for $j = 1, 2$ we have:
\begin{equation*}
    M\partial_j v = \partial_j M v + [M, \partial_j] v
\end{equation*}
By assumption, $\partial_j M v \in H^{k}$. As for the latter term, we show that for $0 \leq k \leq s-2$, we have $[M_t, \partial_j] H^k_0 \rightarrow H^k_0$, and hence $[M, \partial_j]: H^k_0 \rightarrow H^k_0$. Expanding yields:
\begin{align*}
    [M_t, \partial_j] &= C_\gamma \big( \big[ \Delta^{-1/2} \pi_0 P_\lambda \Delta^{-1/2}, \widetilde{C}_\gamma(\partial_j) \big] \big)
\end{align*}
where again $C_{\gamma}( B ) := R_{\gamma}  B  R_{\gamma}^{-1}$ and $\widetilde{C}_{\gamma}( B ) := R_{\gamma}^{-1}  B  R_{\gamma}$. Examining the term inside the conjugation above, we have:
\begin{align*}
    \big[ \Delta^{-1/2} \pi_0 P_\lambda \Delta^{-1/2}, \widetilde{C}_\gamma(\partial_j)\big] &= \Delta^{-1/2} \pi_0 P_\lambda \Delta^{-1/2} \widetilde{C}_\gamma(\partial_j) - \widetilde{C}_\gamma(\partial_j) \Delta^{-1/2} \pi_0 P_\lambda \Delta^{-1/2} \\
    &= \Delta^{-1/2} \pi_0 P_\lambda \Delta^{-1/2} \widetilde{C}_\gamma(\partial_j) - \Delta^{-1/2} \pi_0 P_\lambda \widetilde{C}_\gamma(\partial_j) \Delta^{-1/2} \\
    & \qquad + \Delta^{-1/2} \pi_0 P_\lambda \widetilde{C}_\gamma(\partial_j) \Delta^{-1/2} - \widetilde{C}_\gamma(\partial_j) \Delta^{-1/2} \pi_0 P_\lambda \Delta^{-1/2} \\
    &= \Delta^{-1/2} \pi_0 P_\lambda \big[ \Delta^{-1/2}, \widetilde{C}_\gamma(\partial_j)\big] + \big[ \Delta^{-1/2} \pi_0 P_\lambda, \widetilde{C}_\gamma(\partial_j) \big] \Delta^{-1/2} \\
    &= \Delta^{-1/2} \pi_0 P_\lambda  \Delta^{-1/2} \big[\widetilde{C}_\gamma(\partial_j), \Delta^{1/2}\big] \Delta^{-1/2} + \big[ \Delta^{-1/2} \pi_0 P_\lambda, \widetilde{C}_\gamma(\partial_j) \big] \Delta^{-1/2}
\end{align*}
Notice that the term
\begin{equation*}
    \big[\widetilde{C}_\gamma(\partial_j), \Delta^{1/2}\big] = \big[\partial_j \gamma^i \partial_i, \Delta^{1/2}\big] = \big[\partial_j \gamma^i, \Delta^{1/2}\big]\partial_i
\end{equation*}
maps $H^{k+1}_0 \rightarrow H^{k}_0$ for any $0\leq k \leq s-2$. {Furthermore we have:}
\begin{align*}
    \big[ \Delta^{-1/2} \pi_0 P_\lambda, \widetilde{C}_\gamma(\partial_j) \big] &= \Delta^{-1/2} \pi_0 P_\lambda \widetilde{C}_\gamma(\partial_j) - \widetilde{C}_\gamma(\partial_j) \Delta^{-1/2} \pi_0 P_\lambda \\
    &= \Delta^{-1/2} \pi_0 P_\lambda \widetilde{C}_\gamma(\partial_j) - \Delta^{-1/2} \pi_0 \widetilde{C}_\gamma(\partial_j) P_\lambda + \Delta^{-1/2} \pi_0 \widetilde{C}_\gamma(\partial_j) P_\lambda - \widetilde{C}_\gamma(\partial_j) \Delta^{-1/2} \pi_0 P_\lambda \\
    &= \Delta^{-1/2} \pi_0 \big[P_\lambda, \widetilde{C}_\gamma(\partial_j)\big] + \big[\Delta^{-1/2}\pi_0, \widetilde{C}_\gamma(\partial_j)\big]P_\lambda
\end{align*}
The commutator term $\big[P_\lambda, \widetilde{C}_\gamma(\partial_j)\big]$ can be explicitly calculated to be a second order operator with $H^{s-3}$ coefficients. {As for $\big[\Delta^{-1/2}\pi_0, \widetilde{C}_\gamma(\partial_j)\big]P_\lambda$, we compute:}
\begin{align*}
    [\Delta^{-1/2}\pi_0, \widetilde{C}_\gamma(\partial_j)\big] &= \Delta^{-1/2} \pi_0 \widetilde{C}_\gamma(\partial_j) - \widetilde{C}_\gamma(\partial_j) \Delta^{-1/2} \pi_0 \\
    &= \Delta^{-1/2} \pi_0 \widetilde{C}_\gamma(\partial_j)(\pi_0 + \pi_\mathcal{H}) - \widetilde{C}_\gamma(\partial_j) \Delta^{-1/2} \pi_0
\end{align*}
To proceed we introduce the identity on $H^{k}_0$ as $\Delta^{1/2} \Delta^{-1/2}$ into the term involving the projection $\pi_0$. So we have:
\begin{align*}
    [\Delta^{-1/2}\pi_0, \widetilde{C}_\gamma(\partial_j)\big] &= \Delta^{-1/2} \pi_0 \widetilde{C}_\gamma(\partial_j)\Delta^{1/2} \Delta^{-1/2}\pi_0 - \widetilde{C}_\gamma(\partial_j) \Delta^{-1/2} \pi_0 + \Delta^{-1/2} \pi_0 \widetilde{C}_\gamma(\partial_j) \pi_\mathcal{H} \\
    &= \Delta^{-1/2} \big[ \widetilde{C}_\gamma(\partial_j), \Delta^{1/2}\big]\Delta^{-1/2}\pi_0 + \Delta^{-1/2} \pi_0 \widetilde{C}_\gamma(\partial_j) \pi_\mathcal{H}
\end{align*}
As before $\big[ \widetilde{C}_\gamma(\partial_j), \Delta^{1/2}\big]: H^{k+1}_0 \rightarrow H^{k}_0$, and we have $[M, \partial_j]: H^k_0 \rightarrow H^k_0$ for $0 \leq k \leq s-2$. Hence we have shown that $M\partial_j v = \partial_j M v + [M, \partial_j] v \in H^{k}$. So by the inductive hypothesis we have $\partial_j v \in H^k_0$ which gives us $v \in H^{k+1}_0$. Therefore, we have that, for $0 \leq k \leq s-1$, if $M v \in H^{k}$, then $v \in H^k_0$.
\par Finally, since \eqref{M eqn} says
\begin{equation*}
    M(\symp \Delta f_0) \in H^{s-1}
\end{equation*}
we have
\begin{equation*}
    \symp \Delta f_0 \in H^{s-1}
\end{equation*}
which implies that
\begin{equation*}
    \symp f_0 \in H^{s+1}
\end{equation*}
which finally gives us that $u_0 \in H^{s+1}$ and the proof is complete.
\end{proof}
\begin{corollary}
For $s>6$, if $\gamma(t)$ is an $L^2$ geodesic on $\D^s_\mu(\T^2)$ with, for some $T>0$, $\gamma(0) = \gamma(T) = e$, then $\gamma(t)$ evolves entirely in $\D_\mu(\T^2)$. In other words, any isochronal Euler flow on $\T^2$ must either be smooth or of some `low-regularity' in a $H^s$ sense.
\end{corollary}
\subsubsection{Higher Order Euler-Arnold Equations}
In this section we equip the group of volumorphisms with a right-invariant $H^r$ metric \eqref{inertia} induced by the interia operator $A^r$, for integer $r\geq 1$, then \eqref{Lie group geodesic equation} becomes:

\begin{equation}\label{higher order EA}
    \begin{cases}
    \partial_t A^ru + \nabla_u A^r u + \big( \nabla u \big)^{\top}A^r u = -\nabla p \\
    \text{div}(u) = 0\\
    \end{cases}
\end{equation}
The main result in this section is as follows:

\begin{theorem}\label{main theorem higher order EA}
Let $r \geq 1$ be an integer, $s > 2r + 6$ and consider the group of volume-preserving diffeomorphisms of $\T^2$ equipped with a right-invariant $H^r$ metric \eqref{inertia}. Given $u_0 \in T_e\D^s_\mu$ let $\gamma(t)$ denote the corresponding $H^r$ geodesic. If at time $t=1$, $\gamma$ passes through a point $\eta \in \D^{s+1}_\mu$, then we have $u_0 \in T_e\D^{s+1}_\mu$ and consequently $\gamma(t)$ evolves entirely in $\D^{s+1}_\mu$.
\end{theorem}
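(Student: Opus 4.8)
\section*{Proof proposal for Theorem~\ref{main theorem higher order EA}}

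The plan is to run the argument of Theorem~\ref{main theorem euler} essentially verbatim, while carrying the inertia operator $A^r$ through the computation by means of the $A^{\pm r/2}$-symmetrisation device introduced in the proof of Theorem~\ref{main theorem compressible}. The first step is to record the conservation law attached to \eqref{higher order EA}. In two dimensions the vorticity of the \emph{momentum} $A^r u$ is transported by the flow, so that the analogue of \eqref{conservation of vorticity} reads $\rot(A^r u)\circ\gamma = \rot(A^r u_0)$. Applying the symplectic gradient $\symp$, using that $\symp\rot = \Delta$ on divergence-free fields and that $A^r$ commutes with $d$ and $\delta$, I obtain the analogue of \eqref{euler con}, namely $\Delta A^r u = \Ad_\gamma(\Delta A^r u_0)$. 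Writing $u_0 = \symp f_0 + h_0$ as in the Hodge decomposition \eqref{hodge} and noting that $A^r$ acts as the identity on the harmonic summand, this yields the $H^r$-weighted version of \eqref{euler con exact},
$$u(t) = A^{-r}\,\symp\big(\Delta^{-1} R_{\gamma(t)}^{-1}\Delta A^r f_0\big) + h(t),$$
with $h(t)$ evolving in $\mathcal{H}$.

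Substituting this expression into the identity \eqref{relationship} of Lemma~\ref{workhorse} and absorbing the harmonic contribution into the $H^{s-1}$ remainder $G$, I then perform two symmetrisations in succession. First I insert $A^{-r/2}\,A^{r/2}$ around $P_\lambda$; by the Kato-Ponce estimate \eqref{taylor} the commutator $[A^{r/2},P_\lambda]=[A^{r/2},p^{ij}]\partial_i\partial_j$ costs only coefficients of class $H^{s-1}$, so its contribution lands in $H^{s-1}$ and is absorbed, exactly as in Theorem~\ref{main theorem compressible}. Second, I insert the $\Delta^{-1/2}\pi_0$ symmetrisation and split off an error operator $\Omega$; the verification that $\Omega$ gains a derivative is the weighted analogue of the corresponding lemma in Theorem~\ref{main theorem euler}, the genuinely new terms being the commutators $[A^{r/2},\Gamma]$ and $[A^{r/2},\Delta^{1/2}]$, again controlled by estimates of the type \eqref{taylor}. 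What remains is a single operator, of the shape of \eqref{M Euler} but now carrying the inertia weights, schematically
$$Mv = \int_{0}^{1} R_{\gamma(t)}\,\Delta^{-1/2}\pi_0\,A^{-r/2} P_\lambda(t) A^{-r/2}\,\Delta^{-1/2}\,R_{\gamma(t)}^{-1}\,v \ dt$$
(up to the precise placement of the factors $A^{\pm r/2}$), together with an equation $M(\cdot) = D\eta^{-1}(\Delta\eta - \widetilde G)\in H^{s-1}$ whose resolution will deliver the regularity of the data.

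It then suffices to show that $M$ is a linear isomorphism between the relevant Sobolev spaces and to bootstrap. Boundedness follows from the uniform-in-$t$ control of the constituent operators. For coercivity I pass to the associated bilinear form in the $H^r$ inner product: the $H^r$ pairing absorbs one weight, so that conjugating as in the estimate for $\Lambda(v,v)$ in Theorem~\ref{main theorem compressible} reduces matters to an $L^2$-pairing of $P_\lambda$ against $A^{r/2}(\cdot)$, whence Lemma~\ref{elliptic estimate} produces a lower bound $\gtrsim \norm{A^{r/2}(\cdot)}_{H^1}^2$ and Lax-Milgram applies on the base space. I then run the induction on the Sobolev index, differentiating $M^{-1}$ and controlling $[M,\partial_j]$; this commutator analysis \emph{merges} the two previous computations, so the terms $[\Delta^{1/2},\widetilde{C}_\gamma(\partial_j)]$, $[P_\lambda,\widetilde{C}_\gamma(\partial_j)]$ and $[A^{r/2},\,\cdot\,]$ all appear and must be shown to map into the correct spaces. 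Unwinding finally gives $\symp\Delta f_0\in H^{s-1}$, hence $\Delta u_0\in H^{s-1}$ and $u_0\in T_e\D^{s+1}_\mu$.

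The main obstacle is the simultaneous bookkeeping of two distinct sources of derivative loss inside the single commutator $[M,\partial_j]$: the nonlocal Hodge projector $\Delta^{-1/2}\pi_0$ inherited from incompressibility, and the order-$2r$ inertia operator $A^r$. Each symmetrisation generates commutators whose coefficients sit in $H^{s-1}$ or $H^{s-3}$, while the $A^{r/2}$ insertions consume a further $r$ derivatives on either side; propagating these through the full operator without a net loss exceeding the available regularity is precisely what forces the hypothesis $s>2r+6$, which collapses to the threshold $s>6$ of Theorem~\ref{main theorem euler} when $r=0$.
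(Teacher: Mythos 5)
Your proposal is correct and follows essentially the same route as the paper: the paper derives exactly the conservation law $\rot(A^r u)\circ\gamma=\rot(A^r u_0)$, turns it into $u(t)=\symp\big(\Delta^{-1}A^{-r}R_{\gamma(t)}^{-1}A^r\Delta f_0\big)+h(t)$ (which agrees with your formula after commuting $A^{\pm r}$ past $\symp$ and $\Delta^{\pm1}$), and then simply declares the rest "completely analogous" to Theorem~\ref{main theorem euler}. Your write-up in fact supplies more detail than the paper does on how the $A^{\pm r/2}$ symmetrisation of Theorem~\ref{main theorem compressible} and the $\Delta^{-1/2}\pi_0$ symmetrisation of Theorem~\ref{main theorem euler} are merged inside the single operator $M$ and its commutator $[M,\partial_j]$.
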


Taking the curl of both sides of \eqref{higher order EA}, we acquire a ``conservation of vorticity"-type equation in this context.
\begin{equation}\label{H^r conservation of vorticity}
    \rot(A^r u) \circ \gamma = \rot (A^r u_0)
\end{equation}

This allows us to derive an analogous conservation law to \eqref{euler con exact}.
\begin{lemma}
 For $u_0 = \symp f_0 + h$, $\gamma$ and $u$ as above, we have:
\begin{equation}\label{H^r con exact}
    u(t) = \symp \big( \Delta^{-1} A^{-r} R_{\gamma(t)}^{-1} A^r \Delta f_0  \big) + h(t)
\end{equation}
where $h(t)$ evolves in $\mathcal{H}$.
\end{lemma}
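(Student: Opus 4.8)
The plan is to run the $L^2$ derivation that produced \eqref{euler con exact} verbatim, but with the Eulerian velocity $u$ replaced throughout by the momentum $A^r u$, since \eqref{H^r conservation of vorticity} is formally identical to \eqref{conservation of vorticity} under $u \mapsto A^r u$, $u_0 \mapsto A^r u_0$. The first thing to record is the intermediate $\Ad$-conservation law. Because $A^r$ is assumed to commute with $d$ and $\delta$, both $A^r u$ and $A^r u_0$ are again divergence-free, so the two-dimensional identity $\symp\rot w = \Delta w$ (valid for divergence-free $w$) applies with $w = A^r u$. Applying $\symp$ to both sides of \eqref{H^r conservation of vorticity} exactly as in the passage from \eqref{conservation of vorticity} to \eqref{euler con} then yields
\[
\Delta\big(A^r u\big) = \Ad_{\gamma}\big(\Delta(A^r u_0)\big).
\]
The only nontrivial input here is the pointwise identity $D\gamma\,J\,D\gamma^{\top} = \det(D\gamma)\,J = J$, where $J = \left(\begin{smallmatrix} 0 & -1 \\ 1 & 0\end{smallmatrix}\right)$, which holds because $\gamma$ is volume-preserving in dimension two; this is precisely the computation underlying \eqref{euler con}.

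Next I would unwind the right-hand side using the Hodge splitting $u_0 = \symp f_0 + h$. On the flat torus $A^r$ commutes with the constant-coefficient Fourier multipliers $\Delta$ and $\symp$, so together with $\Delta h = 0$ one gets $\Delta(A^r u_0) = A^r \Delta \symp f_0 = \symp(A^r \Delta f_0)$. The same identity that gives $\Ad_{\gamma}(\symp g) = \symp(R_{\gamma}^{-1} g)$ in the $L^2$ case — again nothing more than $\det D\gamma = 1$ — now produces
\[
\Delta\big(A^r u\big) = \symp\big(R_{\gamma}^{-1} A^r \Delta f_0\big).
\]

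Finally I would solve for $u$. Commuting $A^r$ past $\Delta$ on the left gives $A^r \Delta u = \symp\big(R_{\gamma}^{-1} A^r \Delta f_0\big)$; applying $A^{-r}$ and then $\Delta^{-1}$, and sliding both operators (which commute with $\symp$) inside the symplectic gradient, recovers
\[
u(t) = \symp\big(\Delta^{-1} A^{-r} R_{\gamma(t)}^{-1} A^r \Delta f_0\big) + h(t),
\]
which is \eqref{H^r con exact}. Here $h(t) := \pi_{\mathcal{H}} u(t)$ is the harmonic remainder; it lies in $\mathcal{H}$ automatically, since the first term is exact (an image under $\symp$ of a mean-zero function) and hence $L^2$-orthogonal to $\mathcal{H}$. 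The only place requiring genuine care is the first step — checking that applying $\symp$ to \eqref{H^r conservation of vorticity} really reproduces the $\Ad_{\gamma}$-conservation law, which is exactly where two-dimensionality and volume preservation enter; everything afterward is routine commutator bookkeeping for $A^r$ against the constant-coefficient operators $\Delta$, $\rot$ and $\symp$, which is immediate on $\T^2$.
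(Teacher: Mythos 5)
Your proposal is correct and follows essentially the same route as the paper: apply $\symp$ to the conserved quantity \eqref{H^r conservation of vorticity} to obtain $A^r\Delta u=\Ad_{\gamma}A^r\Delta u_0$ (using the two-dimensional volume-preserving identity $D\gamma\, J\, D\gamma^{\top}=J$), then substitute the Hodge splitting $u_0=\symp f_0+h$ and invert $A^r$ and $\Delta$ on the exact part. The extra bookkeeping you supply --- that $A^r u$ remains divergence-free and that $A^r$ commutes with $\Delta$ and $\symp$ on $\T^2$ --- is exactly what the paper's one-line proof leaves implicit.
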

\begin{proof}
Again, as in the $L^2$ setting, applying the symplectic gradient $\symp:=(-\partial_2, \partial_1)$ to both sides of \eqref{H^r conservation of vorticity} gives us:
\begin{equation*}
    A^r \Delta u = \Ad_\gamma A^r \Delta u_0
\end{equation*}
Which immediately yields \eqref{H^r con exact}.
\end{proof}
\begin{proof}[Proof of Theorem ~\ref{main theorem higher order EA}]
Using Lemma \ref{workhorse} and \eqref{H^r con exact}, this follows from a completely analogous argument as in Theorem \ref{main theorem euler}.
\end{proof}
\begin{remark}
If we define $A = 1 - \alpha^2 \Delta$, the above theorem covers the case of the Euler-$\alpha$ equations studied in \cite{hmr} and \cite{shkoller}.
\end{remark}
\subsection{Swirl-Free Axisymmetric Diffeomorphisms of the Flat 3-Torus $\T^3$}\label{axisymmetric}
In this section we consider the group of axisymmetric diffeomorphisms of the flat periodic box $\T^3$, equipped with any of the Killing fields $K = \frac{\partial}{\partial{x_i}}$ and a certain subclass of \textit{swirl-free} initial data.
\par Given an axisymmetric vector field $v$, we define its swirl to be the function $g(v, K)$ on $\T^3$. A vector field is called swirl-free if this function identically vanishes. In our argument for this section, the swirl-free condition will deliver a conservation law which will play the same role that conservation of vorticity did in the previous section. It is shown in \cite{lmp} that the swirl of an axisymmetric velocity field is transported by its flow. More precisely, if $u_0 \in T_e\A_\mu^s$ and $\gamma(t)$ is the corresponding geodesic in $\A_\mu^s$ then $g(u,K) \circ \gamma(t) = g(u_0,K)$ as long as it is defined. We denote the space of swirl-free axisymmetric vector fields by $T_e\A_{\mu,0}^s$. The proof of the following lemma can be found in \cite{lmp}.
\begin{lemma}\label{curl lemma} Let $K$ be any of the Killing fields $\partial_i$. Then:
\begin{enumerate}
\item If $v \in T_e\A_{\mu,0}^{s+1}$, then $\curl v = \phi K$, where $\phi$ is a function of class $H^s$.
\item If $u_0 \in T_e \A_{\mu,0}^s$ and $u(t)$ is the corresponding solution  of the Euler equations \eqref{Euler}, then, by the above, we can write $\curl u_0 = \phi_0 K$ and $\curl u(t,x) = \phi(t,x)K(x)$. The function $\phi$ is transported along the flow lines: $\phi(t,\gamma(t)) = \phi_0(x)$.
\item If $u_0 \in T_e \A_{\mu,0}^s$ then the corresponding solution $u(t)$ of the Euler equations \eqref{Euler} can be extended globally in time.
\item If $u_0$ and $u(t)$ are as above, and $\gamma(t) \in \A^{s}_{\mu,0}$ is the flow of $u(t)$, then $K$ is preserved by the adjoint: $\Ad_{\gamma(t)}K = K$.
\item For $v \in T_p\T^3$ with $g\big(v, K(p)\big) = 0$, we have $g\big(D\gamma(t) v , K(\gamma(t,p))\big) = 0$
\end{enumerate}
\end{lemma}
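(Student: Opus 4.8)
The plan is to exploit the fact that, since $K=\partial_i$ is a \emph{constant} Killing field, the axisymmetry condition $[K,u]=0$ simply means that $u$ is independent of the coordinate $x_i$, while the swirl-free condition $g(u,K)=0$ means that its $i$-th component vanishes. Fixing $i=3$ without loss of generality, a swirl-free axisymmetric field is therefore of the form $u=(u^1(x_1,x_2),u^2(x_1,x_2),0)$, a genuinely two-dimensional field embedded in $\T^3$. Part (i) then follows by a direct computation: with $u^3\equiv 0$ and $\partial_3 u^j\equiv 0$ one finds $\curl u = (0,0,\partial_1 u^2-\partial_2 u^1)=\phi K$, and since $\phi=\partial_1 u^2 - \partial_2 u^1$ costs exactly one derivative, $\phi\in H^s$ whenever $u\in H^{s+1}$.

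For the dynamical statements I would first record that both defining conditions are preserved by the flow. Axisymmetry persists because $\A^s_{\mu}$ is totally geodesic (the $L^2$ geometry commutes with the isometries generated by $K$); granting this, the pressure solving $-\Delta p=\sum_{j,k}\partial_j u^k\,\partial_k u^j$ inherits $x_3$-independence, so the third momentum equation reduces to $\partial_t u^3 + \nabla_u u^3 = -\partial_3 p = 0$, i.e.\ the swirl is transported and, vanishing at $t=0$, stays zero. In particular the third component of the flow obeys $\dot\gamma^3 = u^3\circ\gamma = 0$, whence $\gamma^3(t,x)=x_3$ and $\gamma^1,\gamma^2$ remain $x_3$-independent. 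This immediately yields (iv) and (v): the matrix $D\gamma$ has third row $(0,0,1)$ and vanishing $(1,3),(2,3)$ entries, so $D\gamma\cdot K=K$ and hence $\Ad_{\gamma}K=(D\gamma\cdot K)\circ\gamma^{-1}=K$, while $(D\gamma\, v)_3=v_3$ shows that $g(v,K)=0$ forces $g(D\gamma\, v, K(\gamma(p)))=0$ since $K(\gamma(p))=K$.

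Part (ii) I would obtain from the three-dimensional vorticity transport $\omega(t)\circ\gamma = D\gamma\cdot\omega_0$, the three-dimensional analogue of the conservation law used in the planar setting, cf.\ Lemma \ref{Lie group conservation law}. Writing $\omega_0=\phi_0 K$ and using (iv) gives $\omega(t)\circ\gamma = \phi_0\,(D\gamma\cdot K)=\phi_0 K$; comparing with the ansatz $\omega(t)=\phi(t)K$ furnished by (i) applied to the (still swirl-free axisymmetric) field $u(t)$ yields $\phi(t,\gamma(t,x))=\phi_0(x)$. The geometric content here is that $\omega$ stays aligned with the \emph{fixed} direction $K$ and the flow does not stretch it, since $D\gamma$ fixes $K$; this is precisely the mechanism that defeats vortex stretching.

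Finally, for (iii), part (ii) gives $\phi(t)=\phi_0\circ\gamma(t)^{-1}$, so $\|\phi(t)\|_{L^\infty}=\|\phi_0\|_{L^\infty}$ and consequently $\|\curl u(t)\|_{L^\infty}=\|\phi(t)K\|_{L^\infty}$ is \emph{uniformly} bounded in $t$. I would then conclude global existence in either of two equivalent ways: invoke the Beale--Kato--Majda continuation criterion, under which the finiteness of $\int_0^T\|\curl u(t)\|_{L^\infty}\,dt$ rules out blow-up of the $H^s$ solution; or, more structurally, observe that $(u^1,u^2)$ solves the \emph{two-dimensional} Euler equations on $\T^2$ (incompressibility and the first two momentum equations close among the $x_3$-independent data, with $p$ also $x_3$-independent), so that global existence is inherited from the classical two-dimensional theory after matching the $H^s(\T^3)$ norm of an $x_3$-independent field with the corresponding $H^s(\T^2)$ norm. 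I expect this step---making the reduction to a conserved $L^\infty$ vorticity, and hence to the two-dimensional global theory, rigorous in the $H^s$ category---to be the main obstacle, the remaining parts being essentially the structural consequences of $K$ being a constant direction together with the swirl-free reduction freezing the $x_3$-variable of the flow.
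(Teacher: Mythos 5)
Your proof is correct, but it is worth noting that the paper itself does not prove this lemma at all: it simply defers to Lichtenfelz, Misio{\l}ek and Preston \cite{lmp}, where these facts are established for a general Riemannian $3$-manifold equipped with an arbitrary Killing field. Your argument is instead a self-contained, elementary verification tailored to the flat case $K=\partial_i$ on $\T^3$, which is exactly the setting of the statement as given. The key simplification you exploit is that $K$ is a \emph{constant} field, so axisymmetry reduces to $x_3$-independence and swirl-freeness to $u^3\equiv 0$; from there (i) is a one-line curl computation, the block structure of $D\gamma$ (third row $(0,0,1)$, vanishing $(1,3)$ and $(2,3)$ entries) gives (iv) and (v) immediately, the Cauchy vorticity formula $\omega(t)\circ\gamma = D\gamma\,\omega_0$ combined with $D\gamma\,K=K$ gives (ii), and the resulting conservation of $\norm{\curl u(t)}_{L^\infty}$ gives (iii) via Beale--Kato--Majda or, more directly, via the reduction to the classical two-dimensional global theory. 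What the reference \cite{lmp} buys over your argument is generality (curved manifolds, non-constant Killing fields, where one must track curvature terms and the variation of $K$ along the flow); what your argument buys is transparency and independence from that machinery in the flat setting. Two small points you lean on that deserve explicit citation rather than the informal justification you give: the persistence of axisymmetry (the fact that $\A^s_\mu$ is totally geodesic in $\D^s_\mu$, which the paper itself quotes from \cite[Section~3]{lmp}) and the uniqueness-of-ODE argument showing $\gamma^1,\gamma^2$ remain $x_3$-independent; with those made precise your proof is complete.
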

We now proceed to the main result for this section:
\begin{T2}
Let $s>\frac{13}{2}$ and consider the group of axisymmetric diffeomorphisms of $\T^3$ with respect to any of the Killing fields $K=\partial_i$, equipped with a right invariant $L^2$ metric. Given $u_0 \in T_e\A^s_{\mu,0}$ let $\gamma(t)$ denote the corresponding $L^2$ geodesic. If at time $t=1$, $\gamma$ passes through a point $\eta \in \A^{s+1}_\mu$, then we have $u_0 \in T_e\A^{s+1}_{\mu, 0}$ and consequently $\gamma(t)$ evolves entirely in $\A^{s+1}_\mu$.
\end{T2}
\par Without loss of generality, we will assume, for the duration of the section, that $K=\partial_3$. So, if we assume a vector field is axisymmetric and swirl-free, this is now equivalent to saying $v = v_1\partial_1 + v_2 \partial_2$, where $\partial_3v_1 = \partial_3v_2 = 0$. Proceeding as before we establish a conservation law:
\begin{lemma}\label{axisymmetric laplacian conservation}
Let $u(t)$ be the Eulerian velocity of the flow $\gamma(t)$. Then we have:
\begin{equation}\label{flat axisymmetric con}
    \Delta u (t) = \Ad_{\gamma(t)}  \Delta u_0.
\end{equation}
\end{lemma}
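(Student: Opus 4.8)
The plan is to mirror the two-dimensional argument of the previous subsection, exploiting the fact that swirl-free axisymmetry collapses the problem onto the $(x_1,x_2)$-plane. First I would record the structural consequences of the hypotheses $K=\partial_3$, $g(u,K)=0$ and $[K,u]=0$: the velocity takes the form $u=u_1\partial_1+u_2\partial_2$ with $u_1,u_2$ independent of $x_3$, so $u$ is genuinely a planar divergence-free field, and the same holds for $u_0$. Since the swirl is transported by the flow (cf. \cite{lmp}), $u(t)$ remains swirl-free and axisymmetric for all $t$, whence $\gamma(t)$ preserves each plane $\{x_3=c\}$, acts there as a diffeomorphism independent of $x_3$, and fixes the $x_3$-coordinate; in block form $D\gamma=\mathrm{diag}(A,1)$ with $A$ the $2\times 2$ planar Jacobian, and $\det A=1$ by volume-preservation.

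Next I would convert the vorticity conservation of Lemma \ref{curl lemma} into a statement about $\Delta u$. By parts (i)--(ii) of that lemma we may write $\curl u=\phi K$ and $\curl u_0=\phi_0 K$ with the scalar $\phi$ transported, $\phi(t)\circ\gamma(t)=\phi_0$. A short computation using $\text{div}\,u=0$ together with $\Delta u=\nabla\,\text{div}\,u-\curl\curl u$ shows that $\Delta u=\symp\phi$, where here $\symp=(-\partial_2,\partial_1,0)$ acts in the plane; likewise $\Delta u_0=\symp\phi_0$. In particular both $\Delta u$ and $\Delta u_0$ annihilate the harmonic (constant) parts, so only the vortical part enters and the identity to be proved is purely a statement about $\phi$.

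The heart of the matter is then the change-of-variables identity $\symp(\phi_0\circ\gamma^{-1})=(D\gamma\cdot\symp\phi_0)\circ\gamma^{-1}=\Ad_{\gamma}(\symp\phi_0)$. I would prove this via the chain rule $\nabla(\phi_0\circ\gamma^{-1})=(D\gamma^{-1})^\top(\nabla\phi_0\circ\gamma^{-1})$, reducing the claim to the pointwise matrix identity $J(D\gamma^{-1})^\top=(D\gamma\circ\gamma^{-1})J$, where $J$ is rotation by $\pi/2$ in the plane. This is equivalent to $JM^\top J^{-1}=M^{-1}$ for the planar block $M=A\circ\gamma^{-1}$, which holds precisely because $\det M=1$ --- the one place where volume-preservation is used. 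The trivial third row and column of $D\gamma$ ensure the identity lifts verbatim to $\T^3$ with both sides having vanishing $x_3$-component.

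Combining the three steps yields $\Delta u=\symp\phi=\symp(\phi_0\circ\gamma^{-1})=\Ad_\gamma(\symp\phi_0)=\Ad_\gamma\Delta u_0$, which is \eqref{flat axisymmetric con}. The main obstacle is the bookkeeping in the second and third steps: verifying that the planar reduction is genuinely preserved in time and that the symplectic-gradient identity embeds consistently into three dimensions. Once the unit-Jacobian matrix identity is in hand, the conclusion is immediate, and the argument is strictly easier than its two-dimensional template since the nonlinearity never appears explicitly.
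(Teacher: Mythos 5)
Your proposal is correct and follows essentially the same route as the paper: reduce to the planar components $u=u_1\partial_1+u_2\partial_2$ with $\partial_3 u_i=0$, identify $\phi=\rot$-type vorticity transported along the flow via Lemma \ref{curl lemma}, and apply the planar symplectic gradient together with the unimodular $2\times 2$ Jacobian identity to convert $\phi=\phi_0\circ\gamma^{-1}$ into $\Delta u=\Ad_\gamma\Delta u_0$. The paper simply defers this last step to the computation for \eqref{euler con}, whereas you have written it out explicitly (and correctly).
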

\begin{proof}
As $u(t)$ evolves in $T_e\A^s_{\mu, 0}$, we may write $u(t) =u_1(t)\partial_1 + u_2(t)\partial_2$, where $\partial_3u_1(t) = \partial_3u_2(t) = 0$. Hence by taking the curl we have:
\begin{align*}
    \curl u (t) &= \big(-\partial_2u_1(t) + \partial_1u_2(t)\big) \partial_3.
\end{align*}
\par Comparing this with Lemma \ref{curl lemma}, we see $\phi = -\partial_2u_1(t) + \partial_1u_2(t)$ and thus, as $\phi$ is preserved along the flow lines ($\phi \circ \gamma = \phi_0$), we have as in 2D:
\begin{equation}
    \big(-\partial_2u_1(t) + \partial_1u_2(t)\big) \circ \gamma = -\partial_2u_{0,1}(t) + \partial_1u_{0,2}(t).
\end{equation}
Since
$$\Delta u_0 = \Delta u_{0,1} \partial_1 + \Delta u_{0,2} \partial_2,$$
$$\Delta u(t) = \Delta u_{1}(t) \partial_1 + \Delta u_{2}(t) \partial_2,$$
and $$D \gamma(t) = \begin{bmatrix}
\partial_1\gamma_1 & \partial_2\gamma_1 & 0\\
\partial_1\gamma_2 & \partial_2\gamma_2 & 0\\
0 & 0 & 1
\end{bmatrix},$$
the lemma follows in an identical fashion to the computation for \eqref{euler con}.
\end{proof}
We proceed with the proof of Theorem \ref{main theorem axisymmetric}: 
\begin{proof}[Proof of Theorem ~\ref{main theorem axisymmetric}]
Using \eqref{relationship} and \eqref{flat axisymmetric con}, this follows from a completely analogous argument as in Theorem \ref{main theorem euler}.
\end{proof}

\begin{remark}
{Further work in this direction will include the study of curved manifolds, possibly with boundary. As mentioned previously, many of our constructions extend to the setting of curved spaces by collecting any lower order terms arising in the various calculations due to derivatives of the components of the metric and its Christoffel symbols on $M$ into a single term which is negligible for our purposes. As an initial case, the author has established the result for swirl-free initial data for certain Killing fields on the round 3-sphere $\mathbb{S}^3$.}
\newline \indent {The strategy for manifolds with boundary will have to be modified however. For example, the estimates involved need to be reproved and the presence of boundary terms when integrating by parts would have to be accounted for. The calculations involving non-local operators $\Delta^{-1}$, etc. will have to be readdressed. Preliminary calculations in the case of axisymmetric flows on the vertically periodic cylinder $D^2 \times \mathbb{S}^1$ with Killing field $\partial_\theta$ suggest that this can be done; for instance the ``inverted" version of \eqref{Lie group conservation law equation} obtained in this setting matches that of the version involved in Theorem \ref{main theorem axisymmetric} up to negligible lower order terms.}
\newline \indent {Also of interest is the case axisymmetric flows with sufficiently small swirl, as considered in \cite{lmp}.}
\end{remark}

\subsection{Symplectomorphisms of the Flat Torus $\T^{2k}$}\label{symplectomorphisms}
\par In this section we consider the group of symplectomorphisms of the torus $\T^{2k}$, equipped with the standard symplectic form $\omega$. The main result presented in this section is 

\begin{theorem}\label{main theorem symplectomorphisms}
Let $s>k+5$ and consider the group of symplectomorphisms of $\T^{2k}$ equipped with a right-invariant $L^2$ metric. Given $u_0 \in T_e\D^s_\omega$ let $\gamma(t)$ denote the corresponding $L^2$ geodesic. If at time $t=1$, $\gamma$ passes through a point $\eta \in \D^{s+1}_\omega$, then we have $u_0 \in T_e\D^{s+1}_\omega$ and consequently $\gamma(t)$ evolves entirely in $\D^{s+1}_\omega$.
\end{theorem}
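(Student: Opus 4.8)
The plan is to follow the template established in the proof of Theorem \ref{main theorem euler} verbatim; the only genuinely new input is the symplectic analogue of the conservation of vorticity \eqref{euler con}. Thus the decisive first step is to establish the identity
\begin{equation*}
    \Delta u(t) = \Ad_{\gamma(t)}\big(\Delta u_0\big),
\end{equation*}
the symplectic counterpart of Lemma \ref{axisymmetric laplacian conservation}. To produce it I would use the Hodge-type splitting $T_e\D^s_\omega = \omega^\sharp\big(\mathcal{H}\oplus d\delta H^{s+2}\big)$ to write $u_0 = \omega^\sharp d f_0 + h_0$ with $f_0\in H^{s+1}_0$ and $h_0\in\omega^\sharp\mathcal{H}$, so that the exact part of $u_0$ is the Hamiltonian vector field of $f_0$. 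The coadjoint conservation law of Lemma \ref{Lie group conservation law}, specialised to the $L^2$ metric on $\D^s_\omega$, is a Kelvin-type statement asserting that the symplectic vorticity is transported by the flow. Applying the operator $\omega^\sharp d$ — which is metric-independent and hence, by the naturality of Hamiltonian vector fields, equivariant under the symplectomorphism $\gamma$, that is $\omega^\sharp d\,(g\circ\gamma^{-1}) = \Ad_{\gamma}\big(\omega^\sharp d\,g\big)$ — and using that on the flat torus the compatible almost complex structure $J = g^\sharp\omega^\flat$ together with $\omega^\sharp$ and $d$ all commute with the componentwise Laplacian $\Delta$, the transport of the vorticity upgrades to the displayed conservation law.

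With this identity secured, the remainder is structurally identical to Theorem \ref{main theorem euler}. I would first record the analogue of \eqref{euler con exact}, namely $u(t) = \omega^\sharp d\big(\Delta^{-1}R_{\gamma(t)}^{-1}\Delta f_0\big) + h(t)$ with $h(t)$ evolving in $\omega^\sharp\mathcal{H}$, and feed it into the workhorse identity \eqref{relationship} of Lemma \ref{workhorse}, whose hypothesis $s>\tfrac{n}{2}+3 = k+3$ is comfortably met. After inserting the commutator corrections that symmetrise the integrand and absorbing the harmonic contribution, the Duhamel remainder $G$, and the error operator (the analogue of $\Omega$) into a single $H^{s-1}$ term $\widetilde{G}$, this yields
\begin{equation*}
    \Delta\eta = D\eta\, M\big(\omega^\sharp d\,\Delta f_0\big) + \widetilde{G},
\end{equation*}
where $M$ is the integral operator built from $R_{\gamma}\,\Delta^{-1/2}\pi_0 P_\lambda\,\Delta^{-1/2}R_{\gamma}^{-1}$ exactly as in \eqref{M Euler}.

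It then remains to show that $M$ is an elliptic isomorphism. The coercivity $\big\lvert\ip{Mv,v}_{L^2}\big\rvert\gtrsim\norm{v}_{L^2}^2$ follows from Lemma \ref{elliptic estimate} together with the $L^2$-orthogonality of the decomposition \eqref{hodge}, so Lax–Milgram gives invertibility at the base level; one then bootstraps up to $H^{s-1}$ by induction on the number of derivatives, controlling each commutator $[M,\partial_j]$ by Kato–Ponce estimates of the type \eqref{kp laplacian estimate}. The threshold $s>k+5$ is precisely what makes all the coefficients involved — the $p_{ij}$, the entries of $D\gamma$, and of $\Gamma = D\gamma\circ\gamma^{-1}$, all a priori of class $H^{s-1}$ — lie in spaces on which these multiplication and commutator estimates close. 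Inverting $M$ gives $\omega^\sharp d\,\Delta f_0\in H^{s-1}$, hence $\omega^\sharp d\,f_0\in H^{s+1}$ and $u_0 = \omega^\sharp d\,f_0 + h_0\in T_e\D^{s+1}_\omega$; persistence of the geodesic in $\D^{s+1}_\omega$ then follows from the forward regularity of the $L^2$ geodesic flow.

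The main obstacle I anticipate is the first step: pinning down the correct notion of symplectic vorticity and verifying that it is transported. In dimension $2k>2$ the vorticity is genuinely a $2$-form rather than a scalar, so one must check carefully that the componentwise Laplacian interacts with the Hodge and complex structures in exactly the way needed to collapse the $2$-form transport into the clean identity $\Delta u = \Ad_{\gamma}\Delta u_0$; this is where the compatibility condition $J^2 = -\mathrm{Id}$ and the flatness of $\T^{2k}$ are used essentially. Once that identity is in hand, no analytic difficulty beyond the Euler case $k=1$ arises, since the operator $P_\lambda$, the elliptic estimate of Lemma \ref{elliptic estimate}, and the entire $M$-machinery are insensitive to the ambient dimension.
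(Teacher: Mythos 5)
Your proposal matches the paper's proof: the paper likewise takes the transported scalar $\delta\omega^\flat u$ from Ebin's work on geodesics of symplectomorphisms, applies $g^\sharp d$ (equivalently your $\omega^\sharp d$, since $d\omega^\flat u = 0$ gives $d\delta\omega^\flat u = \Delta\omega^\flat u$) to obtain $\Delta u = \Ad_\gamma \Delta u_0$ --- its explicit computation with $J=g^\sharp\omega^\flat$ and the symplectic matrix $D\eta$ is just the coordinate form of your naturality-of-Hamiltonian-vector-fields argument --- and then declares the remainder ``completely analogous'' to Theorem \ref{main theorem euler}, exactly as you outline. The one obstacle you flag is in fact moot: for $u\in T_e\D^s_\omega$ the conserved quantity $\delta\omega^\flat u$ is a function, not a $2$-form, so no extra care beyond the $k=1$ case is needed.
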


\par As before we begin with a consequence of \eqref{Lie group conservation law equation} and establish a new conservation law from it. The following can be found in \cite{ebin2012geodesics}.

    \begin{equation}\label{coadjoint con symplectomorphisms}
        \gamma^*\delta\omega^\flat u = \delta \omega^\flat u_0
    \end{equation}
Using this we obtain:
\begin{proposition}
    \begin{equation}\label{con law symplectomorphisms}
        \Delta u = \Ad_\eta \Delta u_0
    \end{equation}
\end{proposition}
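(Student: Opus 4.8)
The plan is to mirror the two–dimensional argument that produced \eqref{euler con}, replacing the scalar vorticity $\rot u$ and the operator $\symp$ by their higher–dimensional symplectic counterparts. Throughout I work on the flat torus $\T^{2k}$ with the standard symplectic form, so that $\omega^\sharp$ is represented by a constant invertible matrix; in particular $\omega^\sharp$ commutes with $d$, with $\delta$, and with the component–wise Laplacian $\Delta$. The first step is to record the analogue of the planar identity $\Delta u = \symp(\rot u)$, namely
\begin{equation*}
\Delta u = \omega^\sharp d\big(\delta\omega^\flat u\big) \qquad \text{for all } u \in T_e\D^s_\omega .
\end{equation*}
Writing $S := \omega^\sharp d$ for the resulting scalar-to-vector operator, this reduces the proposition to understanding how $S$ interacts with composition by the flow $\gamma$.

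To establish the representation formula I would invoke the Hodge theorem \eqref{hodge}. Since $u$ is a symplectic vector field, $\omega^\flat u$ is a closed one–form, and a closed one–form has no coexact component; hence $\omega^\flat u = h + df$ with $h \in \mathcal{H}$ and $f$ a function. Applying $\delta$ gives $\delta\omega^\flat u = \delta df = \Delta f$ (as $\delta h = 0$), while
\begin{equation*}
\Delta u = \Delta\,\omega^\sharp(h + df) = \omega^\sharp\big(\Delta h + \Delta df\big) = \omega^\sharp\, d(\Delta f) = \omega^\sharp d\big(\delta\omega^\flat u\big),
\end{equation*}
where I used that $\omega^\sharp$ is constant–coefficient, that $\Delta h = 0$, and that $\Delta$ commutes with $d$. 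Note the harmonic part of $u$ is annihilated by $\Delta$ and so does not enter.

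Now set $\phi := \delta\omega^\flat u$ and $\phi_0 := \delta\omega^\flat u_0$, so that the conservation law \eqref{coadjoint con symplectomorphisms} reads $\phi\circ\gamma = \phi_0$. Applying $S$ and using the representation formula for $u_0$ gives $\Delta u_0 = S\phi_0 = S(\phi\circ\gamma)$, so the whole statement comes down to the intertwining relation
\begin{equation*}
S(\phi\circ\gamma) = \Ad_\gamma^{-1}\big(S\phi\big).
\end{equation*}
By the chain rule $d(\phi\circ\gamma) = (D\gamma)^\top\big((d\phi)\circ\gamma\big)$, whence $S(\phi\circ\gamma) = \omega^\sharp(D\gamma)^\top\big((d\phi)\circ\gamma\big)$; on the other hand $\Ad_\gamma^{-1}(S\phi) = (D\gamma)^{-1}\big((S\phi)\circ\gamma\big) = (D\gamma)^{-1}\omega^\sharp\big((d\phi)\circ\gamma\big)$. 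Thus the intertwining identity is equivalent to the pointwise relation $\omega^\sharp(D\gamma)^\top = (D\gamma)^{-1}\omega^\sharp$, i.e. to $(D\gamma)^\top\omega^\flat(D\gamma) = \omega^\flat$, which is precisely the statement that $\gamma$ preserves $\omega$. Granting it, $\Delta u_0 = \Ad_\gamma^{-1}(\Delta u)$, and applying $\Ad_\gamma$ yields \eqref{con law symplectomorphisms}.

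The heart of the matter, and the one place where the symplectic hypothesis on $\gamma$ is indispensable, is the conversion of the geometric condition $\gamma^*\omega = \omega$ into the algebraic conjugation $\omega^\sharp(D\gamma)^\top = (D\gamma)^{-1}\omega^\sharp$; this is the structural cancellation that replaces the automatic one available in the area–preserving planar case. The only additional care needed is that $\omega^\sharp$ be constant so that it can be commuted through $d$, $\Delta$, and composition by $\gamma$, which holds on $\T^{2k}$ with the standard $\omega$; on a general compatible symplectic manifold one would instead sweep the derivatives of $\omega^\sharp$ into a lower–order remainder term, exactly as indicated earlier for curved base spaces.
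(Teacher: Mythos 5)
Your proposal is correct and follows essentially the same route as the paper: both apply $d$ (composed with a musical isomorphism) to the transported quantity $\delta\omega^\flat u$, use naturality of the pullback together with the closedness of $\omega^\flat u$ to identify $d\delta\omega^\flat u$ with $\Delta\omega^\flat u$, and then invoke the symplectic-matrix identity $(D\gamma)^\top\omega^\flat (D\gamma)=\omega^\flat$ to convert the resulting conjugation into $\Ad_\gamma$. Your explicit Hodge-theoretic justification of $\Delta u=\omega^\sharp d\,\delta\omega^\flat u$ merely spells out a step the paper leaves implicit.
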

\begin{proof}
Applying $g^\sharp d$ to both sides of \eqref{coadjoint con symplectomorphisms}:
        \begin{align*}
            g^\sharp \eta^* \big( \Delta \omega^\flat u \big) &= g^\sharp \Delta \omega^\flat u_0\\ 
            \Rightarrow D\eta^\top  g^\sharp \omega^\flat \big( \Delta u \circ \eta \big) &= g^\sharp \omega^\flat \Delta u_0 \\ 
            \Rightarrow D\eta^\top  J \big( \Delta u \circ \eta \big) &= J \Delta u_0 \\ 
            \Rightarrow \Delta u &= R_\eta^{-1} J^{-1}  \big(D\eta^\top\big)^{-1}  J \Delta u_0 \\
            &=  R_\eta^{-1} D\eta \Delta u_0 \\ 
            &= \Ad_\eta \Delta u_0
        \end{align*}
        Where, in the penultimate line, we have used the fact that $D_\eta$ is a symplectic matrix.
\end{proof}

\begin{proof}[Proof of Theorem ~\ref{main theorem symplectomorphisms}]
This follows in a completely analogous fashion to the proof of Theorem \ref{main theorem euler}, using the above conservation law \eqref{con law symplectomorphisms}.
\end{proof}

\section{The Frech{\'e}t Setting}\label{smooth}
As mentioned in the introduction, to the best of the author's knowledge, the earliest investigations into the kind of regularity property considered in this paper are due to Constantin \& Kolev \cite{ck2002}, \cite{ck2003} and Kappeler, Loubet \& Topalov \cite{klt1d}, \cite{klt}. In each instance, the authors used this property to construct exponential maps which were local `$C^1_F$-diffeomorphisms' in the Frech{\'e}t category. We obtain analogous results here for the settings we have considered. As in the above, we make use of a Nash-Moser-type inverse function theorem for Frech{\'e}t spaces admitting Hilbert approximations cf. \cite{hamilton} \& \cite[Theorem~A.5]{klt}. First we recall some basic defintions.
\par Let $X$ and $Y$ be Frech{\'e}t spaces with $U \subseteq X$ and $V \subseteq Y$ open subsets. We say $f: U \rightarrow Y$ is differentiable at $u \in U$ in the direction $x \in X$ if the limit 
\begin{equation*}
    \displaystyle \delta_{u}f(x) := \lim_{\varepsilon \to 0} \frac{f(u + \varepsilon x) - f(u)}{\varepsilon}
\end{equation*}
converges in $Y$ with respect to the Frech{\'e}t topology. If $\delta_uf(x)$ exists for all $(u,x) \in U \times X$ and the map given by
\begin{equation*}
\begin{split}
    \delta f : \ &U \times X \rightarrow Y \\ : \ &(u,x) \mapsto \delta_uf(x)
\end{split}
\end{equation*}
is continuous with respect to the Frech{\'e}t topologies on $U \times X$ and $Y$, then $f$ is said to be continuously differentiable\footnote{It is important to note that this notion of continuously differentiable is weaker than the standard definition, cf. \cite[Page 73]{hamilton}} or $C^{1}_F$. A map $f:U \rightarrow V$ is called a $C^1_F$-diffeomorphism if it is a homeomorphism where both $f$ and $f^{-1}$ are $C^1_F$. We now state our version of the inverse function theorem:
\begin{lemma}\label{inverse function theorem}
Let $s_0>\frac{n}{2}+1$ and assume we have a well-defined exponential map on $\big(\G^{s_0}, \ip{ \cdot , \cdot }\big)$:
\[\exp^{s_0} : U^0 \rightarrow V^0\]
which is a $C^1$-diffeomorphism, where $U^0$ is a neighbourhood of $0 \in T_e\G^{s_0}$ and $V^{0}$ is a neighbourhood of $e \in \G^{s_0}$, lying in the image of one of our chart maps for $\G^{s_0}$. Define, for any $k \in \Z_{\geq0}$, $s_k:=s_0+k$, $U^k := U^0 \cap T_e\G^{s_0 + k}$, $U := U^{0} \cap T_e\G$, $V^k := V^0 \cap \G^{s_0 + k}$, $V := V^{0} \cap \G$ and $\exp^{s_k} := \exp^{s_0}\big\vert_{U^k}$. If, for all $k$, the following properties hold:
\begin{enumerate}
    \item $\exp^{s_k}: U^k \rightarrow V^k$ is a bijective $C^1$-map;
    \item For any $u \in U$, $d_u \exp^{s_0} : T_e\G^{s_0} \rightarrow T_{\exp^{s_0}(W)}\G^{s_0}$ is a linear isomorphism with the property that 
    \[d_u \exp^{s_0} (T_e\G^{s_k} \setminus T_e\G^{s_{k+1}}) \subseteq T_{\exp^{s_0}(u)}\G^{s_k}\setminus T_{\exp^{s_0}(u)}\G^{s_{k+1}}.\]
\end{enumerate}
Then $\exp := \exp^{s_0}\big\vert_{U} : U \rightarrow V$ is a $C^1_F$-diffeomorphism.
\end{lemma}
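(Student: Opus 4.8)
The plan is to run a projective-limit argument over the Hilbert approximations, upgrading the two hypotheses level by level and then passing to the Fréchet limits $T_e\G = \bigcap_k T_e\G^{s_k}$ and $\G = \bigcap_k \G^{s_k}$, which realises concretely the Nash--Moser-type inverse function theorem of \cite{hamilton} and \cite[Theorem~A.5]{klt}. First I would dispose of bijectivity: since $U = \bigcap_k U^k$ and $V = \bigcap_k V^k$, and each $\exp^{s_k} = \exp^{s_0}\big|_{U^k}$ is by hypothesis (1) a bijection $U^k \to V^k$ (all being restrictions of the single map $\exp^{s_0}$), the common restriction $\exp = \exp^{s_0}\big|_U$ is a bijection $U \to V$. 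Next I would promote each $\exp^{s_k}$ from a bijective $C^1$-map to a genuine $C^1$-diffeomorphism of Hilbert manifolds: for $u \in U^k$ the derivative $d_u\exp^{s_k}$ agrees with $d_u\exp^{s_0}$ on $T_e\G^{s_k}$, and hypothesis (2), together with the overall isomorphism property of $d_u\exp^{s_0}$, guarantees that $d_u\exp^{s_k}\colon T_e\G^{s_k}\to T_{\exp(u)}\G^{s_k}$ is a bounded linear isomorphism at each level. The classical inverse function theorem in the Hilbert category then makes $\exp^{s_k}$ a local $C^1$-diffeomorphism, and global bijectivity from (1) upgrades this to a global $C^1$-diffeomorphism with $C^1$ inverse $(\exp^{s_k})^{-1} = (\exp^{s_0})^{-1}\big|_{V^k}$.

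For the forward $C^1_F$ statement, fix smooth $u$ and $x$. For each $k$, since $\exp^{s_k}$ is $C^1$ and $u \in U^k$, $x \in T_e\G^{s_k}$, the difference quotients converge in $H^{s_k}$ to $d_u\exp^{s_k}(x) = d_u\exp^{s_0}(x)$, and these level-wise limits coincide because the maps are mutual restrictions. Hence $\delta_u\exp(x) = d_u\exp^{s_0}(x)$ exists with convergence holding simultaneously in every $H^{s_k}$, i.e. in the Fréchet topology; moreover $d_u\exp^{s_0}(x) \in \bigcap_k T_{\exp(u)}\G^{s_k} = T_{\exp(u)}\G$ is smooth, so the limit lands in the smooth category. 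Continuity of $(u,x)\mapsto \delta_u\exp(x)$ in the Fréchet topologies then follows from the projective-limit characterisation, since composing with the projection to $T\G^{s_k}$ returns the continuous map $d\exp^{s_k}$ restricted to the smooth category; the same projective-limit reasoning shows $\exp$ is itself continuous. Observe that this entire paragraph uses only hypothesis (1).

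The inverse direction is handled identically, now applied to $(\exp^{s_k})^{-1}$, which the first paragraph produced as a $C^1$-map. The one genuinely non-formal input, and the main obstacle, is ensuring that the inverse derivative $(d_u\exp^{s_0})^{-1}$ preserves smoothness, i.e. maps $T\G$ into $T_e\G$. This is precisely where hypothesis (2) is indispensable: the shell-preservation inclusion $d_u\exp^{s_0}\big(T_e\G^{s_k}\setminus T_e\G^{s_{k+1}}\big) \subseteq T_{\exp(u)}\G^{s_k}\setminus T_{\exp(u)}\G^{s_{k+1}}$ is an inclusion of "exact regularity" classes under a bijection, which forces the preimage of $T_{\exp(u)}\G^{s_k}$ to be exactly $T_e\G^{s_k}$; intersecting over $k$ yields $(d_u\exp^{s_0})^{-1}(T\G) = T_e\G$. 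Without this no-gain/no-loss of derivatives the inverse derivative could fail to land in the Fréchet limit and $\exp^{-1}$ would not be $C^1_F$. With it in hand, the projective-limit argument of the second paragraph applies verbatim to $\exp^{-1}$, giving existence of $\delta_v\exp^{-1}(y)$ in $T_e\G$ for smooth $v,y$ together with Fréchet continuity of the associated derivative map. Combining the two directions, $\exp$ is a bijective, $C^1_F$ map with $C^1_F$ inverse, hence a $C^1_F$-diffeomorphism, as claimed.
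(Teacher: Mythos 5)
Your proposal is correct and follows essentially the same route as the paper: bijectivity of $\exp$ from the level-wise bijections in hypothesis (1), the open-mapping/shell-preservation argument from hypothesis (2) to make each $d_u\exp^{s_k}$ a Hilbert-space isomorphism, the classical inverse function theorem at each Sobolev level to produce $C^1$ inverses, and then the observation that the difference quotients converge simultaneously in every $H^{s_k}$ topology to yield $C^1_F$ regularity of both $\exp$ and $\exp^{-1}$. Your explicit unpacking of why the shell-preservation inclusion forces $(d_u\exp^{s_0})^{-1}$ to preserve exact regularity classes is a welcome elaboration of a step the paper leaves implicit, but the argument is the same.
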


\begin{proof}
By $(1)$, $\exp: U \rightarrow V$ is a well-defined bijection. Examining the derivatives, we note that, for any $u \in U$, $d_u \exp^{s_k} = d_u\exp^{s_0} \big\vert_{T_e\G^{s_k}}$. Hence, by our assumptions, the map $d_u \exp^{s_k} : T_e \G^{s_k} \rightarrow T_{\exp^{s_0}(u)}\G^{s_k}$ is a bounded linear bijection in the $H^{s_k}$ topologies and hence, by the open mapping theorem, is a linear isomorphism. Now, as $U$ is a dense subset of each $U^k$ in the $H^{s_k}$ topology, by applying the inverse function theorem at each point $u \in U$, we have ${\exp^{s_k}}^{-1} : V^k \rightarrow U^k$ is a $C^1$ map in the $H^{s_k}$ topologies.
\par So, for any $k \in \Z_{\geq0}$, $u\in U$ and $\xi \in V$, we have:
\[\exp = \exp^{s_0}\big\vert_{U} = \exp^{s_k}\big\vert_{U} \ ,\]
\[\delta_u\exp = d_u \exp^{s_k} \big\vert_{T_e\G}\] and
\[ \delta_\xi \big(\exp^{-1}\big) = d_\xi \big({\exp^{s_k}}^{-1}\big)\big\vert_{T_{\xi}\G}\]
Hence, $\delta \exp : U \times T_e\G \rightarrow TV$ and $\delta \big( \exp^{-1} \big) : TV \rightarrow U \times T_e\G$ are continuous in the Frech{\'e}t topologies.
\par Hence, $\exp: U \rightarrow V$ is a $C^{1}_F$ diffeomorphism.
\end{proof}
{We now state the main theorem for this section, which encompasses Theorem \ref{frechet T2} and Theorem \ref{frechet T3} from the introduction.}
\begin{theorem}\label{euler frechet}
For each of the settings we have considered in Section \ref{main}, we can construct an exponential map on a neighbourhood of $0 \in T_e\G$ which is a $C^1_F$ diffeomorphism onto its image.
\end{theorem}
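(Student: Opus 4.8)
The plan is to reduce every case to the abstract criterion of Lemma \ref{inverse function theorem}, so that the only genuine work is the verification of its two hypotheses in each configuration space $\G$ treated in Section \ref{main}. At a base Hilbert level $s_0$ (taken above the threshold required in the corresponding theorem of Section \ref{main}) the geodesic equation \eqref{Lie group geodesic equation} is a smooth ODE on $\G^{s_0}$ in the sense of Ebin--Marsden, so its time-one flow is a smooth local exponential map with $d_0\exp^{s_0}=\mathrm{Id}$; by the Hilbert-space inverse function theorem it restricts to a $C^1$-diffeomorphism $\exp^{s_0}:U^0\to V^0$ on sufficiently small neighbourhoods of $0$ and $e$. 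This is exactly the input demanded by Lemma \ref{inverse function theorem}, and it remains to check properties (1) and (2).

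For property (1) I would argue as follows. Smoothness of the geodesic flow persists at every Sobolev level $s_k=s_0+k$ (the same iteration applies in $\G^{s_k}$), so $\exp^{s_k}=\exp^{s_0}\vert_{U^k}$ is $C^1$ in the $H^{s_k}$-topology, and injectivity is inherited from that of $\exp^{s_0}$. The substantive content of (1) is thus the surjectivity $\exp^{s_k}(U^k)=V^k$, and this is precisely what the regularity theorems of Section \ref{main} deliver: given $\eta\in V^k\subseteq\G^{s_0+k}$, the unique $u_0\in U^0$ with $\exp^{s_0}(u_0)=\eta$ satisfies $u_0\in T_e\G^{s_0+1}$ by the relevant $k=1$ result (Theorem \ref{main theorem compressible}, \ref{main theorem euler}, \ref{main theorem higher order EA}, \ref{main theorem axisymmetric}, \ref{main theorem axisymmetric S^3} or \ref{main theorem symplectomorphisms}). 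Applying that theorem again with base regularity $s_0+1$ gives $u_0\in T_e\G^{s_0+2}$, and bootstrapping $k$ times yields $u_0\in U^k$; this is the sense in which the reduction to $t_0=1$, $k=1$ made at the start of Section \ref{main} suffices.

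Property (2) is where the real difficulty lies. The isomorphism of $d_u\exp^{s_0}$ at level $s_0$ is immediate from the diffeomorphism property, and the forward inclusion $d_u\exp^{s_0}(T_e\G^{s_k})\subseteq T_{\exp(u)}\G^{s_k}$ follows from the $C^1$-smoothness of $\exp^{s_k}$ together with $d_u\exp^{s_k}=d_u\exp^{s_0}\vert_{T_e\G^{s_k}}$. What remains, and is the heart of the matter, is the reverse, regularity-detecting implication: for smooth $u$, if $d_u\exp^{s_0}(w)\in T_{\exp(u)}\G^{s_k}$ then $w\in T_e\G^{s_k}$, equivalently the surjectivity of $d_u\exp^{s_k}$ onto $T_{\exp(u)}\G^{s_k}$. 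I would establish this by linearising the arguments of Section \ref{main}. Differentiating the conservation law \eqref{Lie group conservation law equation} and its case-by-case refinements \eqref{euler con exact}, \eqref{H^r con exact}, \eqref{flat axisymmetric con}, \eqref{con law symplectomorphisms} along the variation of the initial data produces, after the same commutator and Hodge manipulations as in Section \ref{main}, an integral identity for the Jacobi field along $\gamma$ representing $d_u\exp^{s_0}$, in which the operator $M$ of \eqref{M} (resp. \eqref{M Euler}) reappears verbatim. Since $M$ was shown to be a linear isomorphism $H^{k}\to H^{k-2}$ (resp. to satisfy the detecting property ``$Mv\in H^k\Rightarrow v\in H^k_0$'') at every Sobolev level in the relevant range, inverting this linearised relation recovers the $H^{s_k}$-regularity of $w$ from that of $d_u\exp^{s_0}(w)$, which is exactly the surjectivity of $d_u\exp^{s_k}$ and hence property (2).

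With properties (1) and (2) verified, Lemma \ref{inverse function theorem} applies directly and shows that $\exp:=\exp^{s_0}\vert_U$ is a $C^1_F$-diffeomorphism from a neighbourhood of $0\in T_e\G$ onto its image, in each of the settings of Section \ref{main}. The main obstacle throughout is the detecting half of property (2): the nonlinear regularity theorems supply only the surjectivity needed for (1), whereas all the analytic difficulty of the exponential-map construction is concentrated in re-running the elliptic and commutator estimates of Section \ref{main} at the linearised level to control the inverse of $d_u\exp^{s_k}$.
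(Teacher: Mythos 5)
Your overall architecture -- reduce to Lemma \ref{inverse function theorem}, obtain the base diffeomorphism from Ebin--Marsden, and get surjectivity in property (1) by bootstrapping the $k=1$ regularity theorems -- is exactly the paper's, and that part of your argument is fine. The gap is in your treatment of property (2). You propose to differentiate the conservation laws and the identity $\Delta\eta = D\eta\, M(\cdot) + \widetilde{G}$ in the direction of the initial data and claim that $M$ ``reappears verbatim.'' It does not: every object in that identity ($\eta$, $D\eta$, the coefficients $p_{ij}$ of $P_\lambda$, the conjugations by $\gamma(t)$, and $G$) depends on $u_0$ through the flow $\gamma$, so the linearised identity contains, in addition to $D\eta\,M(w)$-type terms, terms involving the variation of $\gamma(t)$ for \emph{all} $t\in[0,1]$ -- i.e.\ the Jacobi field along the whole geodesic, whose Sobolev regularity is precisely what you are trying to determine. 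Controlling those extra terms is not a matter of ``re-running the same commutator estimates''; as written, the argument is circular, and you give no mechanism for closing the loop.

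The paper avoids this entirely by importing the Fredholm structure of the $L^2$ exponential map from Ebin--Misio{\l}ek--Preston \cite{emp} (and from \cite{benn}, \cite{bmp}, \cite{lmp}, \cite{mp} in the other settings): one has the decomposition
\begin{equation*}
d_u\exp^{s_0} = D\eta\,\bigl(\Omega_u - \Gamma_u\bigr),
\end{equation*}
where $D\eta$ and $\Omega_u$ are isomorphisms at every level $T_e\D_\mu^{s_k}$ and $\Gamma_u$ is a smoothing operator mapping $T_e\D_\mu^{s_k}$ into $T_e\D_\mu^{s_{k+1}}$. The regularity-detecting half of property (2) is then a one-line bootstrap: if $d_u\exp^{s_0}(w)\in T_\eta\D_\mu^{s_{k+1}}$, then $w=\Omega_u^{-1}\bigl(D\eta^{-1}(d_u\exp^{s_0}(w))+\Gamma_u(w)\bigr)\in T_e\D_\mu^{s_{k+1}}$. (Fredholmness of index zero also supplies, after shrinking $U^0$, the invertibility of $d_u\exp^{s_0}$ at the base level, which you asserted but which does not follow merely from $\exp^{s_0}$ being a local diffeomorphism at $0$ at points $u\neq 0$.) This decomposition is the missing ingredient in your proposal; without it, or without a genuinely completed linearised analysis, property (2) is not established.
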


\begin{remark}
{We will prove Theorem \ref{euler frechet} for the case of $\D_\mu(\T^2)$ equipped with the $L^2$ metric. We note that this case is not a new result, cf. \cite{omori1973}, however we have used a different method of proof. The analogous results in the other settings considered in Section \ref{main} follow from an analogous argument, making use of the literature pertaining to Fredholmness of exponential maps on groups of diffeomorphisms cf. \cite{benn}, \cite{bmp}, \cite{bkp2016}, \cite{lmp} and \cite{mp}. We will use the notation defined in Lemma \ref{inverse function theorem}.}
\end{remark}
\begin{proof} Let $s_0 > 6$. From \cite{em}, we have that $\D^{s_0}_\mu(\T^2)$ equipped with the $L^2$ metric admits a well-defined exponential map which is a local $C^\infty$ diffeomorphism at the identity $\exp^{s_0}: U^{s_0} \rightarrow V^{s_0}$. We may shrink $U^{s_0}$ if necessary so that $V^{s_0}$ lies in the image of a chart map for $\D^{s_0}_\mu(\T^2)$.
\par We know from \cite[Theorem~12.1]{em} that, for all $k \in \Z_{\geq0}$, $\exp^{s_0}(U^k) \subseteq V^k$. Furthermore, uniqueness and smooth dependence of Lagrangian solutions on initial data in each $H^{s_k}$ topology gives us that $\exp^{s_k} := \exp^{s_0}\big\vert_{U^k} : U^k \rightarrow V^k$ is a well-defined $C^\infty$ injection. Theorem \ref{main theorem euler} now guarantees that, for all $k\in \Z_{\geq0}$, $\exp^{s^k} : U^k \rightarrow V^k$ is in fact a $C^{\infty}$ bijection.
\par Next, from \cite{emp}, we know that $\exp^{s_0}$ is a non-linear Fredholm map of index zero. Hence, we can further restrict $U^{s_0}$ if necessary to guarantee that, for all $u \in U$, $d_u \exp^{s_0} : T_e \D_\mu^{s_0}(\T^2) \rightarrow T_{\exp^{s_0}(u)} \D_\mu^{s_0}(\T^2)$ is a linear isomorphism. Furthermore, defining $\eta:=\exp^{s_0}(u) \in \D_\mu(\T^2)$ we in fact have:
\begin{equation*}
    d_u \exp^{s_0} = D \eta \big( \Omega_u - \Gamma_u \big)
\end{equation*}
where, for any $k \in \Z_{\geq0}$, $D\eta : T_e\D_\mu^{s_k}(\T^2) \rightarrow T_\eta \D_\mu^{s^k}(\T^2)$ and $\Omega_u : T_e\D_\mu^{s_k}(\T^2) \rightarrow T_e\D_\mu^{s_k}(\T^2)$ are linear isomorphisms and $\Gamma_u : T_e\D_\mu^{s_k}(\T^2) \rightarrow T_e\D_\mu^{s_{k+1}}(\T^2) \subset T_e\D_\mu^{s_k}(\T^2)$ is a compact operator. So, if for $w \in T_e \D_\mu^{s_k}(\T^2)$, $d_u\exp^{s_0}(w) \in T_\eta\D_\mu^{s_{k+1}}(T^2)$, we have that $w = \Omega_u^{-1}\bigg( D\eta^{-1}\big(d_u\exp^{s_0}(w)\big) + \Gamma_u(w) \bigg) \in T_e\D_\mu^{s_{k+1}}(\T^2)$.
\par Hence, $d_u\exp^{s_0}(T_e\D_\mu^{s_k}(\T^2) \setminus T_e\D_\mu^{s_{k+1}}(\T^2)) \subseteq T_\eta\D_\mu^{s_k}(\T^2) \setminus T_\eta\D_\mu^{s_{k+1}}(\T^2)$ and we may apply Lemma \ref{inverse function theorem}.
\end{proof}

\begin{remark}
It is important to note that Theorem \ref{euler frechet} does not follow immediately from the work of Ebin and Marsden in \cite{em}. While they define an exponential map for each Sobolev index $s>\frac{n}{2}+1$, $\exp^s : \widetilde{U}^s \rightarrow \widetilde{V}^s$ and, indeed, their Theorem 12.1 ensures that each $\exp^s$ will map smooth initial data to a geodesic in $\D_\mu(\T^2)$, they do so by applying the inverse function theorem in separately for each index. Hence, there is no apriori relationship between $\widetilde{U}^s$ which guarantees that their intersection is not a single point, cf. \cite[page 87]{omori1973}.
\end{remark}

\section*{Funding} The author was supported in part by the National University of Ireland Dr. {\'E}amon de Valera Travelling Studentship in Mathematics.

\section*{Acknowledgments} The bulk of this work was completed during the author's Ph.D. studies at the University of Notre Dame. The author wishes to thank their advisor, Professor Gerard Misio{\l}ek, for introducing them to the problem and for many inspiring conversations.

\bibliographystyle{amsplain}

\begin{thebibliography}{35}

\bibitem{adams}
Adams, R. \textit{Sobolev spaces.} Pure and Applied Mathematics, 65. New York and London:
Academic Press, 1975.

\bibitem{a}
Arnold, V. ``Sur la g{\'e}om{\'e}trie diff{\'e}rentielle des groupes de Lie de dimension infinie et ses applications {\`a} l'hydrodynamique des fluides parfaits." \textit{Ann. Inst. Fourier} 16, no. 1 (1966): 319--361.

\bibitem{ak}
Arnold, V. \textit{Topological methods in hydrodynamics.} New York: Springer, 1998.

\bibitem{bcd}
Bahouri, H., J. Chemin and R. Danchin. \textit{Fourier analysis and nonlinear partial differential equations.} Heidelberg: Springer, 2011.

\bibitem{bkp2016}
Bauer, M., B. Kolev and S. Preston. ``Geometric investigations of a vorticity model equation." \textit{J. Differential Equations} 260, no. 1 (2016): 478--516.

\bibitem{benn}
Benn, J. ``Fredholm properties of the ${L}^2$ exponential map on the symplectomorphism group." \textit{J. Geom. Mech.} 8, no. 1 (2016): 1–-12.

\bibitem{bmp}
Benn, J., G. Misio{\l}ek and S. Preston. ``The exponential map of the group of area-preserving diffeomorphisms of a surface with boundary." \textit{Arch. Ration. Mech. Anal.} 229, no. 3 (2018): 1015--1035.

\bibitem{bruveris}
Bruveris, M. ``Regularity of maps between Sobolev spaces." \textit{Ann. Global Anal. Geom.} 52, no. 1 (2017): 11--24.

\bibitem{ck2003}
Constantin, A. and B. Kolev. ``Geodesic flow on the diffeomorphism group of the circle." \textit{Comment. Math. Helv.} 78, no.4 (2003): 787--804.

\bibitem{ck2002}
Constantin, A. and B. Kolev. ``On the geometric approach to the motion of inertial mechanical systems." \textit{J. Phys. A} 35, no. 32 (2002): R51–-R79.

\bibitem{ebin2012geodesics}
Ebin, D. ``Geodesics on the symplectomorphism group." \textit{Geom. Funct. Anal.} 22, no. 1 (2012): 202--212.

\bibitem{em}
Ebin, D. and J. Marsden. ``Groups of diffeomorphisms and the motion of an incompressible fluid." \textit{Ann. of Math. (2)} 92, no. 1 (1970): 102--163.

\bibitem{emp}
Ebin, D., G. Misio{\l}ek and S. Preston. ``Singularities of the exponential map on the volume-preserving diffeomorphism group." \textit{Geom. Funct. Anal.} 16, no. 4 (2006): 850--868.

\bibitem{hamilton}
Hamilton, R. ``The inverse function theorem of Nash and Moser." \textit{Bull. Amer. Math. Soc. (N.S.)} 7, no. 1 (1982): 65--222.

\bibitem{heslin}
Heslin, P. ``Two-Point Boundary Value Problems on Diffeomorphism Groups." Ph.D. Dissertation, University of Notre Dame, 2021.

\bibitem{hmr}
Holm, D., J. Marsden, T. Ratiu. ``The Euler-Poincar{\'e} equations and semidirect products with applications to continuum theories." \textit{Adv. Math.} 137, no. 1 (1998): 1--81.

\bibitem{kit}
Inci, H., T. Kappeler and P. Topalov. ``On the regularity of the composition of diffeomorphisms." \textit{Mem. Amer. Math. Soc.} 226, no. 1062 (2013).

\bibitem{klt1d}
Kappeler, T., E. Loubet and P. Topalov. ``Analyticity of Riemannian exponential maps on Diff($\mathbb{T}$)." \textit{J. Lie Theory} 17, no. 3 (2007): 481--503.

\bibitem{klt}
Kappeler, T., E. Loubet and P. Topalov. ``Riemannian exponential maps of the diffeomorphism group of $\mathbb{T}^2$." \textit{Asian J. Math.} 12, no. 3 (2008): 391--420.

\bibitem{kp}
Kato, T. and G. Ponce. ``Commutator estimates and the Euler and Navier-Stokes equations." \textit{Comm. Pure Appl. Math.} 41, no. 7 (1988): 891--907.


\bibitem{khesinsymplectic}
Khesin, B. ``Dynamics of symplectic fluids and point vortices." \textit{Geom. Funct. Anal.} 22, no. 5 (2012): 1444--1459.

\bibitem{lmp}
Lichtenfelz, L., G. Misio{\l}ek and S. Preston. ``Axisymmetric diffeomorphisms and ideal fluids on Riemannian 3-manifolds." \textit{Int. Math. Res. Not. IMRN}, no. 1 (2022): 446--485.

\bibitem{majda}
Majda, A. and A. Bertozzi. \textit{Vorticity and incompressible flow.} Cambridge Texts Appl. Math., 2001.

\bibitem{m1997}
Misio{\l}ek, G. ``The exponential map on the free loop space is Fredholm." \textit{Geom. Funct. Anal.} 7, no. 5 (1997): 954--969.

\bibitem{loopgroups}
Misio{\l}ek, G. ``Exponential maps of Sobolev metrics on loop groups." \textit{Proc. Amer. Math. Soc.} 127, no. 8 (1999): 2475--2482.

\bibitem{mp}
Misio{\l}ek, G. and S. Preston. ``Fredholm properties of Riemannian exponential maps on diffeomorphism groups." \textit{Invent. Math.} 179, no. 1 (2010): 191--227.

\bibitem{omori1973}
Omori, H. ``Groups of diffeomorphisms and their subgroups." \textit{Trans. Amer. Math. Soc.} 179 (1973): 85--122.

\bibitem{omori}
Omori, H. ``Infinite-dimensional Lie groups." \textit{Transl. Math. Monogr.} 158 (1997).

\bibitem{polterovich}
Polterovich, L. \textit{The geometry of the group of symplectic diffeomorphisms.} Lectures in Mathematics, ETH Z{\"u}rich, Basel: Birkhäuser, 2001.

\bibitem{shkoller}
Shkoller, S. ``Geometry and curvature of diffeomorphism groups with $H^1$ metric and mean hydrodynamics." \textit{J. Funct. Anal.} 160, no. 1 (1998): 337--365.

\bibitem{shnirelman1994}
Shnirelman, A. ``Generalized fluid flows, their approximation and applications." \textit{Geom. Funct. Anal.} 4, no. 5 (1994): 586--620.

\bibitem{shnirelman}
Shnirelman, A. ``Microglobal analysis of the Euler equations." \textit{J. Math. Fluid Mech.} 7, no. 3 (2005): S387--S396.

\bibitem{smale}
Smale, S. ``An infinite dimensional version of Sard's theorem." \textit{Amer. J. Math.} 87, no. 4 (1965): 861--866.

\bibitem{taylorpseudo}
Taylor, M. \textit{Pseudodifferential operators and nonlinear PDE.} New York: Springer, 2012.

\bibitem{uy}
Ukhovskii, M. and V. Yudovich. ``Axially symmetric flows of ideal and viscous fluids filling the
whole space." \textit{J. Appl. Math. Mech.} 32 (1968): 52--61.

\end{thebibliography}

\end{document}